\newcommand{\ot}{\otimes}
\newcommand{\op}{\oplus}
\newcommand{\D}{\operatorname{D}}
\newcommand{\Gr}{\operatorname{Gr}}
\newcommand{\GL}{\operatorname{GL}}
\newcommand{\R}{\mathbb{R}}
\newcommand{\id}{\operatorname{Id}}
\newcommand{\Hom}{\operatorname{Hom}}
\newcommand{\g}{\mathfrak{g}}
\newcommand{\gso}{\mathfrak{so}}
\newcommand{\cso}{\mathfrak{cso}}
\newcommand{\gsl}{\mathfrak{sl}}
\newcommand{\is}{\mathfrak{is}}
\newcommand{\C}{\mathbb{C}}
\newcommand{\I}{\operatorname{i}}
\newcommand{\e}{\operatorname{e}}
\newcommand{\tr}{\operatorname{tr}}
\newcommand{\diag}{\operatorname{diag}}
\renewcommand{\Im}{\operatorname{Im}}
\renewcommand{\Re}{\operatorname{Re}}
\newcommand{\Ad}{\operatorname{Ad}}
\newcommand{\gl}{\mathfrak{gl}}
\newcommand{\h}{\mathfrak{h}}
\newcommand{\Aff}{\operatorname{Aff}(\R^{n+1})}
\newcommand{\Af}{\operatorname{Aff}}
\newtheorem{thm}{Theorem}
\newtheorem*{thm*}{Theorem}
\newtheorem{prop}{Proposition}
\newtheorem{lem}{Lemma}
\newtheorem{cor}{Corollary}
\theoremstyle{definition}
\newtheorem{dfn}{Definition}
\newtheorem{rem}{Remark}
\title[Affine tube domains with large automorphism groups]{Classification of homogeneous affine tube domains with large automorphism groups in arbitrary dimensions}
\author{Vladimir Ezhov}
\address{V.E. Flinders University, College of Science and Engineering, 1284 South Road, Tonsley  SA 5042, Australia; 
MSU, Faculty of Mechanics and Mathematics, Leninskiye Gory 1,  Moscow GSP-1 119991,  Russia}
\email{vladimir.ejov@flinders.edu.au}
\author{Alexandr Medvedev}
\address{A.M. International School for Advanced Studies, via Bonomea 
265, Trieste 34136, Italy}
\email{amedvedev@sissa.it}
\author{Gerd Schmalz}
\address{G.S. University of New England, School of Science and Technology, Armidale NSW 2351, Australia}
\email{schmalz@une.edu.au}
\begin{document}

\begin{abstract}
We classify tube domains in $\mathbb C^{n+1}$ ($n\ge 1$) with affinely homogeneous base of their boundary and a.) with positive definite Levi form and b.) with Lorentzian type Levi form and affine 
isotropy of dimension at least $\frac{(n-2)(n-3)}2$.
\end{abstract}

\maketitle

\section{Introduction}
The study of homogeneous complex domains and their boundaries goes back to Elie Cartan \cite{Ca1,Ca2,Ca3}, who obtained a complete classification in $\mathbb C^2$. It was shown by Vinberg, Gindikin and Pyatetskii-Shapiro \cite{VGP} that any bounded homogeneous domain in $\C^n$ is holomorphically equivalent to a Siegel domain of the second kind, which can be viewed as a generalization of the upper half-plane to several dimensions. Therefore the remaining interesting case concerns homogeneous domains that are not equivalent to bounded domains.

Winkelmann classified all three-dimensional homogeneous complex manifolds in \cite{W}. In particular, he discovered a domain that is bounded by the Levi-indefinite hypersurface
$$\Im (w +z_1\bar{z}_2)=|z_1|^4,$$
where $(z_1,z_2,w)$ are coordinates in $\mathbb C^3$. This hypersurface features the largest possible symmetry algebra among the non-quadratic hypersurfaces.

Loboda classified all Levi non-degenerate hypersurfaces with 7-dimensional symmetry algebra \cite{L1,L2}, as well as all hypersurfaces with 6-dimensional symmetry algebra and positive definite Levi form \cite{L3}.

In a recent paper Doubrov, Medvedev and The \cite{DMT} prove that quadratic hypersurfaces and the Winkelmann surface are the only homogeneous hypersurfaces in $\mathbb C^3$ whose symmetry groups have open orbits in $\mathbb C^3$.

A complete classification of homogeneous domains in higher dimension becomes an unrealistic endeavour. Therefore the focus of research lies on special classes of domains. Penney \cite{P} constructed a class of homogeneous domains that generalize  homogeneous Siegel domains. They are called Siegel domains of type N-P or nil-balls.

Another approach to produce series of examples of unbounded homogeneous domains is to consider affinely homogeneous tube domains of the form $D= \Omega + \I \R^n$, where $\Omega$ itself is an affinely homogeneous domain in $\R^n$, which serves as affinely homogeneous base of the domain $D$. In this situation all affine automorphisms of $\Omega$ lift to automorphisms of $D$ together with the $n$-dimensional group of imaginary translations, and hence forms a transitive subgroup of affine transformations of $D$. Thus, the task of constructing $D$  reduces to constructing $\Omega$. In order to do so, we start with an affinely homogeneous hypersurface $\Gamma \subset \R^n$ considered as a part of a boundary of $\Omega$. In fact, $\Omega$ can be taken as an open orbit of the group $\Af(\Gamma)$ of affine automorphisms of $\Gamma$ lying on either side of $\Gamma$. 
This means that the dimension of $\Af(\Gamma)$ must be at least $n$ and hence there is a non-trivial affine isotropy of $\Gamma$. For $n=2$, Loboda~\cite{L} proved that any holomorphically homogeneous non-spherical tube hypersurface in $\C^2$ has an affinely homogeneous base. Up to dimension $n=4$ affinely homogeneous hypersurfaces in $\R^n$ with non-trivial isotropy have been classified (see \cite{NS}, \cite{DKR}, \cite{EE1}, \cite{EE2}, \cite{EE3}). Based on this classification Eastwood, Ezhov and Isaev constructed new examples of affinely homogeneous domains in  $\C^4$ (\cite{EEI}, \cite{EI}).

We use Cartan's moving frame method to classify affinely homogeneous surfaces with large symmetry algebra. The key observation, which considerably reduces the complexity, is Proposition \ref{P:tubeDom}. It provides a criterion, when a particular affine surface is a part of a boundary of an affinely homogeneous domain. For example, in $\C^4$ it reduces the analysis from $20$ types of homogeneous affine surfaces with isotropy (cf.~\cite{EE2}) to just $5$ types. All of them are indeed boundaries of affinely homogeneous domains. Using  Proposition \ref{P:tubeDom} we prove the Theorem below, which settles the pseudo-convex case.
\begin{thm}\label{thm1}
Consider a tube domain in $\C^{n+1}$ with an affinely homogeneous 
base. If the group of affine symmetries 
acts locally transitive on the part of the boundary with definite second fundamental form then this part of the boundary is affinely equivalent to 
\[ 
		x_{n+1}=\sum_{i=1}^n x_i^2.
\]
\end{thm}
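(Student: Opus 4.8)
The plan is to combine Proposition~\ref{P:tubeDom} with the classical affine geometry of hypersurfaces having definite second fundamental form. Write $D=\Omega+\I\R^{n+1}$ with $\Omega\subset\R^{n+1}$ affinely homogeneous, and let $\Gamma\subset\partial\Omega$ be the part of the boundary where the second fundamental form is definite; by hypothesis the affine symmetry group $G$ of $\Gamma$ acts transitively on it, and, $\Omega$ being affinely homogeneous, $\Gamma$ satisfies the criterion of Proposition~\ref{P:tubeDom} for bounding a homogeneous domain. First I would fix a point $p\in\Gamma$ and choose affine coordinates $(x',x_{n+1})$, $x'=(x_1,\dots,x_n)$, with $p$ at the origin, $T_p\Gamma=\{x_{n+1}=0\}$, and the $x_{n+1}$-axis along the Blaschke affine normal of $\Gamma$ at $p$ — which is canonically defined because the second fundamental form is definite, and consequently the isotropy group $G_p$ consists of linear maps $\diag(L,\mu)$ with $L\in\GL(\R^n)$, $\mu\neq0$. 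In these coordinates $\Gamma$ is near $p$ a graph $x_{n+1}=f(x')$ with $f(x')=Q(x')+O(|x'|^3)$, where the quadratic part $Q$ is the (definite) second fundamental form; after one more linear change we may take $Q(x')=|x'|^2$.

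The decisive step is to extract from Proposition~\ref{P:tubeDom} that $G_p$ acts with nonzero weight on the transversal line, i.e. that there is some $g=\diag(L,\mu)\in G_p$ with $\mu\neq1$. Heuristically this must hold because otherwise every one-parameter $G$-orbit through a point near $\Gamma$ would, to first order, remain inside the hyperplanes $\{x_{n+1}=\mathrm{const}\}$, so no $G$-orbit off $\Gamma$ could be open and $\Omega$ could not be homogeneous; the precise statement is what Proposition~\ref{P:tubeDom} supplies, and it is exactly what fails for the sphere and for the two-sheeted hyperboloid, whose affine isotropy is orthogonal and hence acts trivially on the normal line. Granting such a $g$, I would then note that, since $g$ preserves the graph $\Gamma$, comparison of the quadratic terms forces $L^{\mathsf T}L=\mu I$; in particular $\mu>0$ and $L=\mu^{1/2}O$ with $O\in\operatorname{O}(n)$, so $g$ is a genuine dilation.

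The remaining step is a short formal computation with this dilation. The condition that $g$ preserve $x_{n+1}=f(x')$ becomes $f(y')=\mu\, f(\mu^{-1/2}O^{-1}y')$, so the homogeneous component $f_k$ of degree $k$ satisfies $f_k=\mu^{1-k/2}\,(f_k\circ O^{-1})$; integrating $|f_k|^2$ over the unit sphere (which $O$ preserves) gives $(1-\mu^{2-k})\,\|f_k\|^2_{L^2(S^{n-1})}=0$, hence $f_k\equiv0$ for every $k\ge3$. Thus $f=Q$, so $\Gamma$ coincides near $p$ with the paraboloid $x_{n+1}=\sum_{i=1}^n x_i^2$; being an orbit of an affine Lie group, $\Gamma$ is real-analytic, so it is an open piece of this paraboloid. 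Finally I would check that the paraboloid genuinely occurs: it is preserved by the translations along it, by $\operatorname{O}(n)$ acting on $x'$, and by the dilations $x'\mapsto\lambda x'$, $x_{n+1}\mapsto\lambda^2 x_{n+1}$, and these act transitively on $\Omega=\{x_{n+1}>|x'|^2\}$, whose tube $D=\Omega+\I\R^{n+1}$ realises the situation of the theorem.

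I expect the one genuinely non-routine ingredient to be the second paragraph: turning the hypothesis that $\Gamma$ bounds a \emph{homogeneous} domain (through Proposition~\ref{P:tubeDom}) into the concrete fact that the affine isotropy scales the normal direction nontrivially. Once a dilation sits in $G_p$, the rest is the elementary graph computation above; alternatively the dilation forces the cubic term of $f$ — i.e. the Fubini--Pick form — to vanish, so that $\Gamma$ is a nondegenerate quadric by the classical characterization, after which the sphere and the two-sheeted hyperboloid are excluded exactly as noted, but the direct argument disposes of all higher-order terms at once.
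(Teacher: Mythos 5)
Your argument is correct, and it reaches the conclusion by a route that differs from the paper's in its final step, though it shares the same key input. The paper's proof is a two-line deduction: Proposition~\ref{P:tubeDom} puts a dilation (an isotropy element with $e\neq 0$) into the stabilizer, Corollary~\ref{cor1} then forces $\|L_1\|_{(p,q)}=0$, definiteness of the second fundamental form upgrades this to $L_1=0$, and Proposition~\ref{P:cones} identifies the surface as the quadric. You use the same dilation -- and your heuristic for why it must exist is exactly the content of the proof of Proposition~\ref{P:tubeDom}, where transitivity at the off-surface point $p_t$ requires an isotropy element with $e\neq0$ -- but instead of passing through the frame invariant $L_1$ and Proposition~\ref{P:cones} you kill \emph{all} homogeneous components $f_k$, $k\ge 3$, of the graph function at once via the relation $f_k=\mu^{1-k/2}(f_k\circ O^{-1})$ and the $O$-invariant $L^2$-norm on the sphere. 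This buys a self-contained elementary proof that does not need the termination statement of Cartan's method (Proposition~\ref{P:cones}) nor the norm on $\Hom(S^2(V),V)$, at the modest cost of invoking the affine normal to block-diagonalize the isotropy and the real-analyticity of a homogeneous orbit to pass from vanishing Taylor components to $f=Q$. Two small points to tidy up: Proposition~\ref{P:tubeDom} is stated for surfaces with \emph{non-zero} trace-free $L_1$, so strictly you should either note that the case of vanishing trace-free $L_1$ is already settled by Proposition~\ref{P:cones}, or observe that the ``only if'' half of its proof (transitivity off the surface forces $e\neq0$) uses no hypothesis on $L_1$; and the normalization of the isotropy to $\diag(L,\mu)$ is the same step the paper performs by using the translational part $D$ of $G_2$ to make $L_1$ trace-free.
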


The next interesting case to consider are surfaces with Lorentzian signature of the second fundamental form. We obtain a classification of such surfaces under the assumption that the dimension of the isotropy is large enough.

\begin{thm}\label{thm2}
Assume that an affinely homogeneous tube domain in $\C^{n+1},$ $n\ge4$ has an affinely homogeneous part of the boundary with a Lorentzian second fundamental form. If the dimension of its affine isotropy is at least $\frac{(n-2)(n-3)}2$ then the boundary is locally affinely equivalent to one of the listed below:
\begin{enumerate}
\item
$
x_{n+1} = x_1 x_n + \sum_{i=2}^{n-1} x_i^2;
$ 
\item
$
x_{n+1}=x_1 x_{n} + \sum_{i=2}^{n-1} x_i^2 + x_{1}^3;
$
\item\label{wink}
$
x_{n+1} = x_1 x_{n} + \sum_{i=2}^{n-1} x_i^2 + x_{1}^2 x_2 + 
\alpha 
x_{1}^4;
$
\item\label{typeC1}
$ x_{n+1}=x_1 x_{n} + x_1\sum_{i=2}^{n-1}
x_i^2;
$
\item
$0=x_{n+1} -2x_{n+1}x_n - 2 x_1 x_n^2  + x_1 x_{n} + 
\frac12 \sum_{i=2}^{n-1} x_i^2,$
 equivalently\newline
$0=(1-2x_n)(x_{n+1} + x_1 x_{n} +\frac12\sum_{i=2}^{n-1} x_i^2)  + 
x_n\sum_{i=2}^{n-1} x_i^2
 ;$
\item $ x_{n+1} = x_1 x_{n} + \sum_{i=2}^{n-1} 
x_i^2  + x_1 x_2^2;
$
\item \label{typeD2}
$0=
(1-2x_n)(x_{n+1} + x_1 x_{n} +\frac12\sum_{i=2}^{n-1} x_i^2)  + x_n
x_2^2. $
\end{enumerate}
\end{thm}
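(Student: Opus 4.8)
The plan is to run Cartan's normalization procedure on the germ of $\Gamma$ at one point, use the large isotropy to collapse the number of essential variables, and then extract the classification from the affine homogeneity equations, pruning with Proposition~\ref{P:tubeDom} at every stage. First I would place a point of $\Gamma$ at the origin with tangent hyperplane $\{x_{n+1}=0\}$ and expand $\Gamma$ as a graph $x_{n+1}=f(x_1,\dots,x_n)=Q+C_3+C_4+\cdots$, with $C_m$ homogeneous of degree $m$ and $Q$ the (Lorentzian) second fundamental form. The affine maps fixing the tangent plane consist of the linear ones $(x',x_{n+1})\mapsto(Ax',d\,x_{n+1})$ together with the vertical shears $(x',x_{n+1})\mapsto(x'+b\,x_{n+1},x_{n+1})$; with these one normalizes $Q$ to $x_1x_n+\sum_{i=2}^{n-1}x_i^2$ and kills the $Q$-divisible part of $C_3$. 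Since a nontrivial vertical shear cannot fix $C_3$ (non-degeneracy of $Q$), the affine isotropy $\h$ of $\Gamma$ --- which, being an isotropy algebra, is algebraic --- projects isomorphically onto a subalgebra of $\cso(n-1,1)$, the conformal algebra of $Q$, so $\dim\h\le\binom{n}{2}+1$. As $\dim\gso(n-2)=\frac{(n-2)(n-3)}{2}$, and this value is realized --- by surface~\ref{wink} --- the hypothesis makes $\h$ at least as large as a rotation subalgebra, and the bound in the theorem is sharp.

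The crucial step is then a classification of algebraic subalgebras $\h\subseteq\cso(n-1,1)$ of dimension at least $\frac{(n-2)(n-3)}{2}$: for $n\ge4$ each such $\h$ contains, after conjugation, a standardly embedded $\gso(k)$ acting on a nondegenerate coordinate block of size $k$ with $n-k$ bounded by an absolute constant, and the admissible configurations of the remaining generators (coming from the stabilizer of the block and of the nearby null directions) form an explicit finite list. This rests on the ``gap'' phenomenon for pseudo-orthogonal Lie algebras --- a proper subalgebra of $\gso(m)$ has dimension at most $\binom{m-1}{2}$, with a further drop below that --- together with the bound $n-2$ on the dimension of a nilpotent subalgebra of $\gso(n-1,1)$. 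Granting this, invariance of $f$ under the rotation block (modulo the $Q$-divisible corrections removed by normalization) forces each $C_m$ to be a polynomial in the invariants of the block: a bounded number of transverse linear coordinates together with the quadratic radius of the block. Thus $f$ is reduced, uniformly in $n$, to a function of a fixed small number of variables, and the residual isotropy to a group of bounded dimension --- in effect the problem is pulled down to a low-dimensional one.

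Finally I would impose affine homogeneity: local transitivity is equivalent to the existence of affine vector fields tangent to $\Gamma$ whose values at the base point span the entire tangent space. Writing the tangency condition degree by degree turns this, for the reduced $f$, into a finite system of polynomial identities among the surviving coefficients, supplemented --- along the repeated radius variable --- by a scalar differential equation whose singular, non-polynomial solutions are exactly what yield the two surfaces of the list given only implicitly (surfaces~(5) and~\ref{typeD2}). Solving this system branch by branch, organized by the normalized cubic $C_3$ and by which rotation block occurs, while at each stage discarding any germ that Proposition~\ref{P:tubeDom} excludes as a boundary of an affinely homogeneous domain --- which also supplies the one-sidedness needed to realize the survivors as bases of tube domains --- produces exactly the seven listed families. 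I expect the main obstacle to be the interplay of the subalgebra bookkeeping of the second step with the exhaustiveness of the case analysis of the third: in particular the extremal case $\dim\h=\frac{(n-2)(n-3)}{2}$, realized by several of the surfaces in the list and in which the rotation block has size $n-3$ or $n-2$ (being compensated, in the former case, by nilpotent symmetries as for surfaces~\ref{wink} and~\ref{typeD2}), must be treated completely, and the non-polynomial branches must not be overlooked.
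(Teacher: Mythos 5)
Your outline shares the paper's general philosophy (Cartan normalization of the graph, the tube-domain criterion, the isotropy bound), but the step you call crucial --- a classification of algebraic subalgebras $\h\subseteq\cso(n-1,1)$ of dimension at least $\frac{(n-2)(n-3)}2$, each containing a standard $\gso(k)$ with $n-k$ bounded --- is both unproven and unusable at the low end of the range. For $n=4$ the hypothesis only guarantees a $1$-dimensional isotropy, and for $n=5$ a $3$-dimensional one; no nontrivial rotation block is forced (the isotropy could a priori sit inside a nilpotent or parabolic part of $\gso(n-1,1)$), so the claimed reduction of $f$ to ``a fixed small number of variables, uniformly in $n$'' does not follow. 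Even when a block $\gso(n-2)$ or $\gso(n-3)$ is present, its invariant cubics already form a space of dimension $5$--$6$ (spanned by $x_1^3,x_1^2x_n,x_1x_n^2,x_n^3,x_1r^2,x_nr^2$ with $r^2=\sum x_i^2$), so the residual case analysis is not obviously finite in the way you assert. This is a genuine gap: without an independent mechanism to pin down the cubic term, the argument does not close.

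The paper supplies exactly that mechanism, and it is where your use of Proposition~\ref{P:tubeDom} as a mere ``pruning'' device undersells it. Proposition~\ref{P:tubeDom} forces the isotropy to contain an element acting on the normalized cubic tensor $L_1$ by a \emph{nonzero scalar}. Taking the Jordan--Chevalley decomposition of such an element inside $\gso(n-1,1)$, its semisimple part must lie in a maximally non-compact torus (compact tori have no nonzero real eigenvalues) and its nilpotent part vanishes; hence $L_1$ is an eigenvector of a hyperbolic torus element with eigenvalue in $\{\pm3,\pm2,\pm1\}$. This yields, independently of any dimension hypothesis, the trichotomy $L_1\in\{x_1^3,\;3x_1^2x_2,\;3x_1q(\bar x)\}$ of Proposition~\ref{P:L1}. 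The bound $\dim\h_0\ge\frac{(n-2)(n-3)}2$ is used only afterwards, in the third case, to force the stabilizer of the quadratic form $q$ to be of maximal dimension, i.e.\ $\alpha=\diag(1,0,\dots,0)$ or $\alpha=\diag(1,\dots,1)$. Finally, the homogeneity constraints are not a scalar differential equation but the algebraic bracket-closure conditions of Proposition~\ref{CS} on the plane $V_3$ (e.g.\ $2\alpha\circ q+z\alpha+[d,\alpha]=0$), whose solutions $q_{11}=\pm1$, $z=\mp2$ produce the two polynomial and two implicit surfaces (4)--(7). If you replace your subalgebra classification by this eigenvector argument, the rest of your plan aligns with the paper's proof.
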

The hypersurfaces of type \eqref{wink} are higher-dimensional analogues the Winkelmann hypersurface and the hypersurfaces studied in \cite{EI}. For $\alpha=\frac{1}{12}$ these hypersurfaces are holomorphically equivalent to a quadric. The hypersurfaces with $\alpha>\frac{1}{12}$ and, analogously, $\alpha<\frac{1}{12}$, are holomorphically equivalent to each other. The homogeneous domains they bound are nilballs. 

Hypersurfaces \eqref{typeC1}--\eqref{typeD2} are new and generalise the hypersurfaces 
\begin{align*}
x_4 = x_1 x_3 + x_1x_2^2,
\\
x_4^2 = x_1 x_3 + x_1^2x_2,
\end{align*} found in \cite{EEI}.

The structure of the paper is as follows.
In the following section we introduce the notations for Cartan's moving frame method applied to homogeneous submanifolds. In Section \ref{S:Aff} we use Cartan's moving frame method to derive several lower order invariants for affine hypersurfaces. In Section \ref{S:bound} we introduce the notion of tubular affine hypersurface, namely a surface which is a part of a boundary of an affinely homogeneous domain and formulate a few results regarding tubular hypersurfaces. In Section~\ref{S:Lorenz} we proceed with the case of Lorentzian second fundamental form and prove Theorem~\ref{thm2}. In the last section we study hypersurface~\eqref{typeC1} from Theorem~\ref{thm2} in detail.  We compute its group of biholomorphic automorphisms and prove that it is not a boundary of a nil-ball.

\section{Cartan's Moving Frame Method}

In this section we introduce an algebraic version of Cartan's equivalence method following \cite{doukom}.
Consider a homogeneous manifold $M=G/G_0.$ The manifold $M$ carries the structure of a principle $G_0$-bundle $\pi:G\to M.$ On the Lie group $G$ there exists a unique left invariant differential form $\omega:TG\to\g$ such that $\omega_e:T_e G\to \g=\id.$ The form $\omega$ is called the Maurer-Cartan form. If $G$ is a subgroup of a matrix group then the explicit formula for $\omega$ is:
\[ \omega=g^{-1}dg. \]

Consider a submanifold $N$ of $M$. 
\begin{dfn}
A moving frame on $N$  is a section $s\colon N \to G$ of the restriction of the principle bundle $\pi:G\to M,$ i.e. $\pi\circ s=\id$.
\end{dfn}
The set of all frames over $N$ is a $G_0$-principle bundle. An arbitrary moving frame on $N$ has the form $s \cdot h,$
where $h:N\to G_0$ and $s$ is some fixed frame. The aim of Cartan's method is to construct a canonical frame for $N$. This is done by normalizing the pullback $s^*\omega$. The image of $s^*\omega$ is an $n$-dimensional linear subspace of $\g$, i.e. $s^*\omega$ defines a map $N\to\Gr_n(\g)$.  bundle If we change the frame $s$ to  $\bar s=s \cdot h,$
where $h:N\to G_0$ then
\[ \bar s^*\omega=(\bar s)^{-1} d \bar s=h^{-1}s^{-1} (d s) h+h^{-1}d h=h^{-1}(s^*\omega)  h+h^{-1}d h. \]
The summand $h^{-1} d h$ takes values in the Lie algebra $\g_0$ of $G_0$. Therefore, the change of the lift acts linearly on $s^*\omega$ $\operatorname{mod}\g_0$ by the adjoint action. The first step in the normalization procedure is to choose a canonical representative $V_1\in\Gr_n(\g/\g_0)$ in the orbit of $s^*\omega$ $\operatorname{mod}\g_0$. Then we consider only frames such that $\Im s^*\omega \equiv V_1$ $\operatorname{mod}\g_0$. By doing so we reduce the freedom in the choice of the frame to the group:
\[ G_1=\{g\in H\mid \Ad_g(V_1)=V_1\}. \]

If we assume that $N$ is of constant type, i.e. $\Im s^*\omega$ $\operatorname{mod}\g_0$ belongs to one orbit for all points of $N$ then we can proceed further. Notice that all homogeneous submanifolds are of constant type. The next step is to consider changes of frames with respect to the group $G_1$. The change $\bar s=s \cdot h,$ $h:N\to G_1$ induces an action on $\Im s^*\omega$ $\operatorname{mod}\g_1$ where $\g_1$ is the Lie algebra of $G_1$. Let $\pi\colon\g/\g_1\to\g/\g_0$ be the canonical projection. We choose a representative $V_2\in\Gr_n(\g/\g_1)$ in the orbit of $s^*\omega$ $\operatorname{mod}\g_1$ such that $\pi\colon V_2\to V_1$ is a linear isomorphism. This gives us the second reduction of the frame to the group
\[ G_2=\{g\in G_1\mid\Ad_g(V_2)=V_2\}. \]
Eventually, the sequence $G_i$ will stabilize to the isotropy subgroup $G_\infty$ of the surface $N$ and the obtained lift will be a canonical (defined up to the action of $G_\infty$) moving frame. 

The method described above can be formalized as follows for the equivalence problem of homogeneous submanifolds (cf. \cite{doukom}).   Let $H\subset G$ be the symmetry group of an affinely homogeneous 
submanifold $N$ and $\h$ be the corresponding symmetry Lie 
algebra. 
For an arbitrary point $p$ of the submanifold $N$ let $G_0$ be the stabilizer 
of $p$ and 
\[\g_0=\{X\in \g \mid X_p=0 \}\]
 be the corresponding Lie algebra. Consider the series of subalgebras
\begin{equation}\label{e:g}
 \g_{i+1}= \{ X\in \g_i \mid [X,\h]\in \g_i+\h \}.
\end{equation}
The Lie algebra $\g_i$ is in fact an infinitesimal stabilizer of the $i$\textsuperscript{th} jet of $N$ at the point $p$. It was shown in \cite{doukom} that in this setting we have
\begin{equation}\label{e:V} V_i=(\h + \g_i)/\g_i, i\ge 0 ,\quad V_\infty=(\h + 
\g_\infty)/\g_\infty. 
\end{equation}

An important observation is that in the homogeneous setting there is an additional restriction on $V_i$.
\begin{prop}\label{CS}
Consider a homogeneous submanifold with symmetry algebra $\h$. Let $\g_i$ and $V_i$ be defined as in \eqref{e:g} and \eqref{e:V}. For every $X+\g_{i+1},Y+\g_{i+1}\in V_{i+1}$ we have 
$[X,Y]+\g_{i}\in V_{i}$.
\end{prop}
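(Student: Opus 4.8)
The plan is to reduce the statement to two facts: that each $\g_i$ is a Lie subalgebra of $\g$, and that elements of $\g_{i+1}$ bracket $\h$ into $\g_i+\h$, which is exactly the defining property \eqref{e:g}.

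First I would prove by induction on $i$ that $\g_i$ is a Lie subalgebra of $\g$. The base case $i=0$ is immediate, since $\g_0$ is the Lie algebra of the point stabilizer $G_0$. For the inductive step, assuming $\g_i$ is a subalgebra, take $X,Y\in\g_{i+1}$; then $[X,Y]\in\g_i$, and for $h\in\h$ the Jacobi identity gives $[[X,Y],h]=[X,[Y,h]]-[Y,[X,h]]$. Writing $[Y,h]=g+h'$ with $g\in\g_i$ and $h'\in\h$ (possible since $Y\in\g_{i+1}$) and using that $\g_i$ is a subalgebra together with $[\g_{i+1},\h]\subseteq\g_i+\h$, each of $[X,[Y,h]]$ and $[Y,[X,h]]$ lies in $\g_i+\h$, so $[[X,Y],h]\in\g_i+\h$ and hence $[X,Y]\in\g_{i+1}$. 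Thus $\g_{i+1}\subseteq\g_i$ is again a subalgebra.

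Next I would take $X+\g_{i+1},Y+\g_{i+1}\in V_{i+1}=(\h+\g_{i+1})/\g_{i+1}$ and write $X=h_X+a_X$, $Y=h_Y+a_Y$ with $h_X,h_Y\in\h$ and $a_X,a_Y\in\g_{i+1}$. Expanding $[X,Y]=[h_X,h_Y]+[h_X,a_Y]+[a_X,h_Y]+[a_X,a_Y]$, the first term lies in $\h$ since $\h$ is a subalgebra, the middle two lie in $\g_i+\h$ by \eqref{e:g}, and the last lies in $\g_{i+1}\subseteq\g_i$ by the previous paragraph. Hence $[X,Y]\in\h+\g_i$, i.e. $[X,Y]+\g_i\in(\h+\g_i)/\g_i=V_i$, as claimed.

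The argument is essentially formal, so I do not expect a serious obstacle; the only point that needs care is that $\g_i+\h$ is in general not a subalgebra, so the stability of the chain $\g_i$ under brackets has to be extracted from \eqref{e:g} by the Jacobi-identity computation above rather than assumed, and one must keep $\h$-components around throughout — they do no harm because the conclusion $V_i=(\h+\g_i)/\g_i$ already absorbs $\h$.
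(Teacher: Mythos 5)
Your proof is correct and follows essentially the same route as the paper: expand $[\h+\g_{i+1},\h+\g_{i+1}]$ and absorb the cross terms using the defining property \eqref{e:g}. The only difference is that you explicitly verify, via the Jacobi identity, that each $\g_i$ is a Lie subalgebra --- a fact the paper's one-line proof uses implicitly when it writes $[\g_{i+1},\g_{i+1}]\subseteq\g_{i+1}$.
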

\begin{proof}
Indeed, since 
\[ [\h+\g_{i+1},\h+\g_{i+1}]\subseteq 
\h+\g_{i+1}+[\h,\g_{i+1}] \subseteq \h+\g_{i}\] we see that $[X,Y]+\g_{i} 
\in (\h+\g_{i})/\g_{i} = V_{i}.$ 
\end{proof}
In order to classify homogeneous submanifolds of dimension $n$ in $M$ we use the following algorithm:
\begin{enumerate}
	\item Classify elements $V_0\in\Gr_n(\g/\g_0)$ up to the action 
	of $G_0$.
	\item For any $V_i\in\Gr_n(\g/\g_i)$ classified on the previous step let 
	$G_{i+1}\subseteq G_i$ be the stabilizer subgroup of $V_i$ 
	and $\g_{i+1}$ be the corresponding Lie algebra. If $\g_{i+1}\neq
	\g_i$ then classify all $V_{i+1}\in\Gr_n(\g/\g_{i+1})$ up to the action of $G_{i+1}$ such that $\pi( V_{i+1} )=V_{i}$ and  $V_{i+1}$ satisfies Proposition~\ref{CS}.
	\item
 If $\g_{i+1}=\g_i$ then $\g_\infty=\g_i$, and   $V_\infty=V_i$. The obtained symmetry algebra of the submanifold is $\h=\g_\infty\op V_\infty$ and the isotropy group is $G_\infty$.
\end{enumerate}
 
\section{Affinely homogeneous surfaces}\label{S:Aff}

Let $\Gamma$ be a homogeneous affine surface in $\R^{n+1}$. Since the surface is homogeneous and the Maurer-Cartan form $\omega$ is left-invariant it is enough to normalize a frame $s:\Gamma\to G=\Aff$ in one point. Let us fix the point $p=(0,\dots,0)^t$. Then the pair  $(G,G_0)$ is
\[ 
G = \begin{pmatrix}
A & B
\\
0 & 1
\end{pmatrix},\quad
G_0 =  \begin{pmatrix}
A & 0
\\
0 & 1
\end{pmatrix},
\quad A\in\GL_{n+1}(\R),B\in\R^{n+1}.
\]
The corresponding pair of Lie algebras is
\[ 
\g = \begin{pmatrix}
a & b
\\
0 & 0
\end{pmatrix},\quad
\g_0 =  \begin{pmatrix}
a & 0
\\
0 & 0
\end{pmatrix},
\quad a\in\gl_{n+1}(\R),b\in\R^{n+1}.
\]

We recall that the normalization of the frame $s$ consists in a step-by-step choice of elements $V_i\in\Gr(\g/\g_i)$ in the orbit of $s^*\omega$ $\operatorname{mod}\g_i$ under the action of $G_i$. At the first step, when $i=0$, we consider $\Gr_n(\g/\g_0)=\Gr_n(\R^{n+1})$ under the action of $G_0=\GL_{n+1}(\R)$. Since $\GL_{n+1}(\R)$ acts transitively on hyperplanes in $\R^{n+1}$ we can choose
\[ V_0 = \left(
\begin{array}{cc}
  * & X
 \\
* & 0 \\
 0 & 0 
\end{array}
\right), \quad X\in \R^{n},\] 
where $X=(x_1,x_2,\dots,x_n)^t$ is an arbitrary vector in $\R^n$. 
The stabilizer Lie group for $V_0$ and the corresponding Lie algebra  are
\begin{align*}
G_1 &= \begin{pmatrix}
A_1 & D & 0
\\
0 & C & 0
\\
0 & 0 & 1
\end{pmatrix},\quad A_1\in\GL_n(\R), D\in\R^n, C\in\R^*
\\
\g_1 &=  \begin{pmatrix}
a_1 & e & 0
\\
0 & c & 0
\\
 0 & 0 & 0
\end{pmatrix},
\quad a_1\in\gl_n(\R),e\in\R^n, c\in\R
\end{align*}

Let us proceed with the second step of the moving frame normalization. All $n$-dimensional planes $V_1$ in $\g/\g_1$ that satisfy condition $\pi(V_1)=V_0$ have the form
\begin{equation}\label{e:P}
\begin{pmatrix}
* & * & X
\\
(PX)^t & * & 0
\\
 0 & 0 & 0
\end{pmatrix}, \quad P\in\operatorname{Mat}_{n\times n},
\end{equation} 
i.e. they are determined by some matrix $P$.
\begin{lem}
For a homogeneous surface the matrix $P$ is symmetric.	
\end{lem}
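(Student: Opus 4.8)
The plan is to apply Proposition~\ref{CS} with $i=0$. According to \eqref{e:P}, the plane $V_1\subset\g/\g_1$ is the span of the classes of the matrices
\[
X_u=\begin{pmatrix} \ast & \ast & u \\ (Pu)^{t} & \ast & 0 \\ 0 & 0 & 0 \end{pmatrix},\qquad u\in\R^{n},
\]
where the three blocks marked $\ast$ are $\g_1$-directions and hence immaterial for the class in $\g/\g_1$. Since $V_1=(\h+\g_1)/\g_1$, for each $u$ we may fix these blocks so that $X_u\in\h+\g_1$, and then Proposition~\ref{CS} gives $[X_u,X_v]+\g_0\in V_0$ for all $u,v\in\R^{n}$. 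The whole argument consists in reading off what this says about $P$.

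For the computation I would write $X_u=\left(\begin{smallmatrix}A_u&b_u\\0&0\end{smallmatrix}\right)$ in the block form of $\g$, so that $b_u=(u,0)^{t}\in\R^{n+1}$ and $A_u\in\gl_{n+1}(\R)$ is the matrix whose last row is $((Pu)^{t},\ast)$ and whose first $n$ rows consist of $\ast$-entries. Then the $\R^{n+1}$-component of the product $X_uX_v$ equals $A_ub_v$, whose last coordinate is $(Pu)^{t}v=u^{t}P^{t}v$ while the remaining $n$ coordinates involve only the $\ast$-entries. Hence the $\R^{n+1}$-component of $[X_u,X_v]=X_uX_v-X_vX_u$ has last coordinate $u^{t}P^{t}v-v^{t}P^{t}u=u^{t}(P^{t}-P)v$.

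Finally, $\g_0$ consists of the matrices with vanishing $\R^{n+1}$-block, so $[X_u,X_v]+\g_0$ is identified with the $\R^{n+1}$-component just computed, and by the choice of $V_0$ made at the first normalization step this component lies in $V_0$ precisely when its last coordinate vanishes. Therefore $u^{t}(P^{t}-P)v=0$ for all $u,v\in\R^{n}$, i.e.\ $P=P^{t}$. The only point requiring care here is purely clerical: one must keep the $\g_0$-directions distinct from the smaller set of $\g_1$-directions when identifying the bracket's class, and check — as the computation does — that the undetermined $\ast$-entries of the representatives contribute nothing to the last coordinate of $[X_u,X_v]$, so that the particular choice of $X_u$ inside $\h+\g_1$ is irrelevant.
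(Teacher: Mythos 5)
Your proof is correct and follows the same route as the paper: apply Proposition~\ref{CS} with $i=0$ to two representatives of $V_1$, compute the translation component of their bracket, and read off that its last coordinate $u^{t}(P^{t}-P)v$ must vanish for the class to lie in $V_0$. The only difference is that you spell out the block bookkeeping (and the independence of the choice of representative in $\h+\g_1$) more explicitly than the paper does.
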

\begin{proof}
Indeed, consider two vectors  from $V_1$: 
\[ v_1=\begin{blockarray}{(ccc)}
* & * & X  \\
(PX)^t & * & 0 \\
0 & 0 & 0
\end{blockarray}, \quad
v_2 = \begin{blockarray}{(ccc)}
* & * & Y  \\
(PY)^t & * & 0 \\
0 & 0 & 0
\end{blockarray}. \]
Their commutator is
\[ [v_1,v_2]
= 
\begin{blockarray}{(ccc)}
* & * & *  \\
* & * & X^t P^t Y - Y^t P^t X \\
0 & 0 & 0
\end{blockarray}.
 \]
Proposition \ref{CS} requires $[v_1,v_2]\in V_0+\g_0$. Therefore,  $ X^t P^t Y - 
 Y^t P^t X = 0$, i.e., the matrix $P$ is symmetric.
 \end{proof}
 
 The symmetric form $P$ corresponds to the second fundamental form of the surface $\Gamma$. In what follows we restrict ourself to surfaces with a non-degenerate second fundamental form, i.e. with non-degenerate $P$.
 The adjoint action of $G_1$ on \eqref{e:P} induces the following change of $P$:
 \[ P \to C^{-1} A_1^t P A_1 . \]
 Assume that the form $P$ has signature $(p,q)$ with  $p\ge q$. Then $P$ can be normalized to 
 \[I_{p,q}=\begin{pmatrix}
 0 & 0 & I_q \\
 0 & I_{p-q} & 0 \\
 I_q & 0 & 0
 \end{pmatrix}.\] 
The resulting second order normalization of the frame is
\[ V_1 = \left(
\begin{array}{ccc}
  * & * & X
 \\
(I_{p,q}X)^t & * & 0 \\
 0 & 0 & 0
\end{array}
\right).\] 
The stabilizer Lie group for $V_1$ and the corresponding Lie algebra  are
\begin{align*}
G_2 &= \begin{pmatrix}
E\cdot A_2 & D & 0
\\
0 & E^2 & 0
\\
0 & 0 & 1
\end{pmatrix},\quad A_2\in O_{p,q}(\R), D\in\R^n, E\in \R^*_+
\\
\g_2 &=  \begin{pmatrix}
a_2 +e & e & 0
\\
0 & 2 e & 0
\\
 0 & 0 & 0
\end{pmatrix},
\quad a_2\in\gso_{p,q}(\R),e\in\R^n, e\in\R
\end{align*}

Let us proceed with the third step of the normalization procedure. All $V_2\in\Gr_n(\g/\g_2)$ that satisfy condition $\pi(V_2)=V_1$ can be represented as
 	\[ V_2 = \begin{pmatrix}
 	L_1(X) & * & X \\
 	(I_{p,q}X)^t & * & 0  \\
 	0 & 0 & 0  \\
\end{pmatrix} \]
where  $L_1(X)\in \gl_n(\R) / \gso_{p,q}(\R)$. We can assume that for every $X$ the matrix $L_1(X)$ is symmetric with respect to
$I_{p,q}$, i.e.:
\begin{equation}\label{eq:sym}
L_1(X)^t I_{p,q} =  I_{p,q} L_1(X).
\end{equation}
The symmetric with respect to $I_{p,q}$ part of an arbitrary matrix $T$ is given by the formula
\[ T_{sym}=\frac12(T+I_{p,q} T I_{p,q}).
\]
Under the action of $G_2$ the tensor $L_1$ changes as a $(2,1)$-tensor. Its structure for homogeneous surface is given by the following lemma.
\begin{lem}
For homogeneous surfaces $L_1\colon V_n\otimes V_n 
\to V_n$ is self-adjoint with respect to $I_{p,q}$ and 
symmetric in its arguments.	
\end{lem}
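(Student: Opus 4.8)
The plan is to extract the required properties of $L_1$ from Proposition~\ref{CS} applied at level $i=1$, exactly as in the proof that $P$ is symmetric. First I would write down three generic vectors $v_1,v_2,v_3\in V_2$ corresponding to vectors $X,Y,Z\in V_n=\R^n$,
\[
v_a = \begin{blockarray}{(ccc)}
L_1(X_a) & * & X_a \\
(I_{p,q}X_a)^t & * & 0 \\
0 & 0 & 0
\end{blockarray},
\]
and compute the commutator $[v_1,v_2]$ modulo $\g_1$. The $(1,3)$-block of the commutator is automatically $X-Y$-antisymmetric and lands in the $X$-slot; the new information sits in the upper-left $\gl_n$-block, which produces a term of the form $L_1(X)I_{p,q}Y - L_1(Y)I_{p,q}X$ together with lower-order pieces, and in the $(2,1)$-block. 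Proposition~\ref{CS} demands $[v_1,v_2]+\g_1 \in V_1 = (\h+\g_1)/\g_1$, and reading off the shape of $V_1$ from \eqref{e:P} with $P=I_{p,q}$ forces these blocks to be consistent with a vector of that form. The $(2,1)$-block constraint $(I_{p,q}Z)^t$-type matching will give self-adjointness of $L_1$ with respect to $I_{p,q}$ (or rather confirm the normalization \eqref{eq:sym} is compatible), while the $\gl_n$-block constraint, after using \eqref{eq:sym} to replace $L_1(Y)^t I_{p,q}$ by $I_{p,q}L_1(Y)$, will yield symmetry in the arguments: $L_1(X)I_{p,q}Y = L_1(Y)I_{p,q}X$ up to an $\gso_{p,q}$-ambiguity, hence after contracting with $I_{p,q}$ the trilinear form $\langle L_1(X)\cdot, \cdot\rangle_{I_{p,q}}$ applied to $(Y,Z)$ versus $(X,Z)$ agrees, i.e. total symmetry of the associated cubic form.

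The cleanest formulation, which I would state explicitly, is to pass to the trilinear form $C(X,Y,Z) := \langle L_1(X)Y, Z\rangle$, where $\langle\cdot,\cdot\rangle$ is the pairing given by $I_{p,q}$. Condition \eqref{eq:sym} says $C(X,Y,Z) = C(X,Z,Y)$, i.e. symmetry in the last two arguments — this is the "self-adjoint with respect to $I_{p,q}$" part, and it is essentially built into how we chose the normalized representative $L_1(X)$ (we projected onto the $I_{p,q}$-symmetric part). The content of the lemma is the extra symmetry $C(X,Y,Z) = C(Y,X,Z)$, which combined with the previous gives full symmetry of $C$ as a cubic form. So the proof reduces to: compute $[v_1,v_2] \bmod \g_1$, impose membership in $V_1$, and read off $C(X,Y,Z) - C(Y,X,Z) = 0$ after the dust settles.

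The main obstacle I anticipate is bookkeeping rather than conceptual: the commutator $[v_1,v_2]$ will contain contributions not only from the $L_1$-blocks paired with the $X$-vectors but also cross-terms between the various unspecified $*$-entries, and one has to be careful that those $*$-entries genuinely lie in $\g_1$ (so they do not pollute the $V_1$-membership condition) or that their contribution to the relevant block is absorbed by the freedom inside $V_1$. Concretely, the $(1,2)$ and $(2,2)$ entries marked $*$ in $V_2$ transform into the $\g_1$-directions $e\in\R^n$ and $c\in\R$, so their commutator contributions to the upper-left $n\times n$ block and to the $(2,1)$ block need to be checked against the $V_1$-profile in \eqref{e:P}; the key point is that the term $L_1(X)I_{p,q}Y$ cannot be cancelled by any $\g_1$-contribution or by the $(PX)^t$-row freedom in \eqref{e:P} once we fix $P = I_{p,q}$, because it has the wrong tensorial type. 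Once that is verified, the antisymmetrization $X\leftrightarrow Y$ of the offending block must vanish identically in $X,Y$, which is precisely the asserted symmetry of $L_1$ in its arguments. I would also remark that the "changes as a $(2,1)$-tensor under $G_2$" claim, already asserted in the text, guarantees these symmetry conditions are $G_2$-invariant, so it suffices to verify them for the normalized frame at the single base point $p$.
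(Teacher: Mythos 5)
Your overall strategy coincides with the paper's: apply Proposition~\ref{CS} at level $i=1$ to two elements of $V_2$, compute the commutator modulo $\g_1$, and force membership in $V_1$; and your second paragraph correctly isolates the logic (self-adjointness \eqref{eq:sym} is the choice of representative of $L_1(X)\in\gl_n(\R)/\gso_{p,q}(\R)$, not a consequence of homogeneity, while the content of the lemma is the symmetry $L_1(X)Y=L_1(Y)X$). However, the block-by-block attribution in your first paragraph is wrong, and taken literally that version of the argument stalls. There is no ``$\gl_n$-block constraint'' at this stage: the $(1,1)$-block of $\g_1$ is all of $\gl_n(\R)$, so the upper-left block of $[v_1,v_2]$ is annihilated the moment you reduce modulo $\g_1$, and nothing can be read off from it. The one and only nontrivial condition imposed by $[v_1,v_2]+\g_1\in V_1$ is the one you set aside as ``merely confirming the normalization'': the $(2,1)$-row of the commutator, namely $X^tI_{p,q}L_1(Y)-Y^tI_{p,q}L_1(X)$ (plus the cross-terms discussed below), must equal $\bigl(I_{p,q}Z\bigr)^t$ for $Z=L_1(X)Y-L_1(Y)X$ the $(1,3)$-entry. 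It is precisely here that \eqref{eq:sym} is used to move $I_{p,q}$ past $L_1$, after which the identity collapses to $2\bigl(L_1(X)Y-L_1(Y)X\bigr)^tI_{p,q}=0$. So the two attributions in your first paragraph must be swapped; as written, the step that is supposed to deliver the symmetry in the arguments does not exist.

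Your concern about the $*$-entries is well placed and is in fact a point the paper itself glosses over: the $(2,2)$-entries $c(X),c(Y)$ of the actual lifts contribute $c(X)(I_{p,q}Y)^t-c(Y)(I_{p,q}X)^t$ to the $(2,1)$-block, and $\g_1$ does \emph{not} absorb this (its $(2,1)$-block is zero), so the raw conclusion is only $L_1(X)Y-L_1(Y)X=\tfrac12\bigl(c(Y)X-c(X)Y\bigr)$. The repair is that the scalar part of the representative is still free modulo $\g_2$: the $\g_2$-direction $eI_n$ in the $(1,1)$-block is coupled to $2e$ in the $(2,2)$-entry, so replacing $L_1(X)$ by $L_1(X)-\tfrac12 c(X)I_n$ (which preserves \eqref{eq:sym}) normalizes $c\equiv 0$ and makes the symmetry hold on the nose. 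If you include that normalization explicitly, your argument becomes complete — and slightly more careful than the printed proof.
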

\begin{proof}
Consider any 2 vectors from $V_2$:
\[ 
v_1 = \begin{blockarray}{(ccc)}
L_1(X) & * & X  \\
( I_{p,q} X)^t & * & 0 \\
0 & 0 & 0
\end{blockarray}, \quad
v_2 =
\begin{blockarray}{(ccc)}
L_1(Y) & * & Y  \\
( I_{p,q} Y)^t & * & 0 \\
0 & 0 & 0
\end{blockarray}.
\]
Their commutator is
\begin{equation}\label{e:L1}
[v_1,v_2] = \begin{blockarray}{(ccc)}
* & * & L_1(X)Y- L_1(Y)X  \\
X^t I_{p,q}  L_1(Y) - Y^t  I_{p,q} L_1(X) & * & 0 \\
0 & 0 & 0
\end{blockarray}.
\end{equation}
If we use the notation $Z=L_1(X)Y- L_1(Y)X$ then Proposition~\ref{CS} implies 
that \eqref{e:L1} has the form
\[ \begin{pmatrix}
* & * & Z  \\
 (I_{p,q} Z)^t & * & 0 \\
0 & 0 & 0
\end{pmatrix},\]
which is equivalent to
\[ X^t I_{p,q}  L_1(Y) - Y^t  I_{p,q} L_1(X) 
=
\left(L_1(X)Y- L_1(Y)X\right)^t I_{p,q}.
\]
Using \eqref{eq:sym} we obtain that $2\left(L_1(X)Y-L_1(Y)X\right)^t 
I_{p,q}=0$, i.e. $L_1$ is symmetric in its arguments.
\end{proof}
Consider elements in $G_2$ of the form
\begin{equation}\label{e:D} \begin{pmatrix}
1_n & D & 0
\\
0 & 1 & 0
\\
0 & 0 & 1
\end{pmatrix},\quad D\in\R^n. 
\end{equation}
The action of \eqref{e:D}
on $\tr L_1$ is given by 
\[\tr L_1(X) \to \tr L_1(X)+\frac{n-2}2 
(D,X)_{p,q}.\] We 
can normalize $L_1$ to a trace-free tensor since the bilinear form $I_{p,q}$ is non-degenerate. The equivalence class of the trace-free part of $L_1$ 
under the action of $CO_{p,q}^+=O_{p,q} \times \R^*_+$ is the third order invariant 
for homogeneous affine surfaces. In particular, there is a unique homogeneous affine surface with the trace-free part of $L_1$ equal to $0$.
\begin{prop}\label{P:cones}
A homogeneous surface with the second fundamental form of signature $(p,q)$ is 	isomorphic to 
	\begin{equation} \label{srf:1}
	x_{n+1}=\sum_{i=1}^p x_i^2 - \sum_{i=p+1}^n x_i^2 
	\end{equation}
if and only if the trace-free part of $L_1$ is $0$. The surface \eqref{srf:1} has the isotropy group $CO_{p,q}^+=O_{p,q} \times \R^*_+$ and the isotropy groups of all Levi non-degenerate homogeneous surfaces of signature $(p,q)$ are contained in $CO_{p,q}^+$.
\end{prop}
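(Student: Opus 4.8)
The plan is to split the statement into the isotropy assertions together with the easy implication, and the substantive ``only a quadric'' implication.

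For the isotropy: the affine isotropy of any Levi non-degenerate homogeneous surface fixes the chosen point, so it is a subgroup of $G_0=\GL_{n+1}(\R)$, and it preserves the $1$- and $2$-jets of the surface; hence it lies in the stabilizer $G_1$ of $V_0$ and then in the stabilizer $G_2$ of $V_1$. Since $G_2$ is block upper-triangular and acts on the tangent space $\R^n$ by $X\mapsto EA_2X$ with $A_2\in O_{p,q}$ and $E\in\R^*_+$, the induced linear isotropy representation lands in $CO_{p,q}^+=O_{p,q}\times\R^*_+$, which is the last claim. For the quadric \eqref{srf:1} the maps $x\mapsto Ax$ with $A\in O_{p,q}$ (extended by the identity on $x_{n+1}$) and the dilations $x\mapsto\lambda x$, $x_{n+1}\mapsto\lambda^2x_{n+1}$ are affine symmetries fixing the origin, so its isotropy is exactly $CO_{p,q}^+$. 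Since the graphing function of \eqref{srf:1} is purely quadratic, its cubic part --- equivalently the trace-free part of $L_1$ --- vanishes; this also follows abstractly because the dilations act on the normalized $L_1$ by a nontrivial character of $\R^*_+$ and must fix it.

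For the converse, assume the trace-free part of $L_1$ is $0$. When $n\ge3$ the map $D\mapsto\frac{n-2}{2}(D,\,\cdot\,)_{p,q}$ is an isomorphism $\R^n\to(\R^n)^*$, so the elements \eqref{e:D} of $G_2$ normalize $\tr L_1$ to $0$ as well; hence $L_1\equiv0$ and the frame group reduces to $G_3\cong CO_{p,q}^+$ with $\g_3$ the Lie algebra $\cso_{p,q}$. I would then carry the normalization one step further. A lift $V_3\in\Gr_n(\g/\g_3)$ with $\pi(V_3)=V_2$ consists of elements of the shape
\[
v_X=\begin{pmatrix}0 & M(X) & X\\ (I_{p,q}X)^t & N(X) & 0\\ 0 & 0 & 0\end{pmatrix},
\]
with $N(X)\in\R$ and $M(X)\in\R^n$ the remaining lower-order tensors. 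Computing $[v_X,v_Y]$ with $L_1=0$ and imposing Proposition~\ref{CS} yields $N(X)Y=N(Y)X$, hence $N\equiv0$; it then forces a symmetry relation on $M$ which, together with the residual $CO_{p,q}^+$-gauge, pins $M$ down to the tensor of the quadric (namely $0$); and the same mechanism, applied order by order, forces every higher tensor to vanish. Thus $\g_\infty=\cso_{p,q}$ and $V_\infty$ is the fixed $n$-plane $\{v_X:X\in\R^n,\ M=N=0\}$, with no residual moduli --- this is the statement, already alluded to just above Proposition~\ref{P:cones}, that there is a unique homogeneous surface with vanishing trace-free part of $L_1$.

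By the uniqueness part of Cartan's equivalence method \cite{doukom}, two homogeneous surfaces with the same canonical frame are affinely equivalent; since \eqref{srf:1} realizes exactly this data, every homogeneous surface of signature $(p,q)$ with vanishing trace-free part of $L_1$ is affinely equivalent to \eqref{srf:1}. The cases $n=1$ (the condition is vacuous and the only such curves are parabolas) and $n=2$ (argue directly, or cite Cartan) are treated apart. The main obstacle is precisely the prolongation step above: showing that the single equation $L_1=0$ propagates to the vanishing of all higher-order invariants, i.e. that the quadric is rigid. Proposition~\ref{CS} is the essential tool, the residual $CO_{p,q}^+$-action absorbs the pure-gauge components, and one must organise the induction on the order of the jet carefully; this is where essentially all of the work lies.
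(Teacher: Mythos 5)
Your route is the paper's: use the translational part of $G_2$ to normalize $\tr L_1$ to zero, reduce the structure group to $CO_{p,q}^+$, and argue that the normalization then terminates with an abelian transitive part whose exponential is \eqref{srf:1}. (The paper's own proof is even terser --- it simply asserts that ``the Cartan method terminates'' --- and says nothing about the isotropy claims, so your first paragraph is a genuine addition; note only that to place the isotropy inside $CO_{p,q}^+$ rather than inside $CO_{p,q}^+\ltimes\R^n$ you must also kill the translational $D$-part, which your ``linear isotropy representation'' phrasing sidesteps.)

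The gap is the prolongation step, and it is not merely unfinished: as stated it fails. With $N\equiv 0$, Proposition \ref{CS} applied to
\[
v_X=\begin{pmatrix}0 & M(X) & X\\ (I_{p,q}X)^t & 0 & 0\\ 0 & 0 & 0\end{pmatrix}
\]
only requires $M(X)Y^tI_{p,q}-M(Y)X^tI_{p,q}\in\gso_{p,q}\oplus\R I_n$ together with the matching condition on the $(2,2)$-entry, and $M(X)=\lambda X$ satisfies both (the first expression lies in $\gso_{p,q}$, the second vanishes identically); the residual $CO_{p,q}^+$-gauge merely rescales $\lambda$. This solution is realized: the algebra it generates is that of a central quadric (for definite signature, the sphere), which is affinely homogeneous, has identically vanishing cubic form and hence vanishing trace-free $L_1$, and is not affinely equivalent to \eqref{srf:1}. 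So no organisation of the induction will extract the vanishing of all higher-order tensors from Proposition \ref{CS} alone; the same phenomenon shows that your parenthetical for $n=1$ (``the only such curves are parabolas'') is wrong, since ellipses, hyperbolas and $y=\e^x$ are homogeneous with $L_1$ normalizable to zero. What actually closes the argument in the setting where the proposition is applied is the tubular hypothesis: by Proposition \ref{P:tubeDom} the isotropy then contains a genuine scaling element, which acts on the next-order tensor with weight $E^2$ (the computation $L_3\mapsto E^2 L_3$ at the start of Section \ref{S:Lorenz}) and therefore forces $M=0$. You need to import that hypothesis (or some substitute excluding the central quadrics) before the order-by-order vanishing can go through.
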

\begin{proof}
If the trace-free part of $L_1$ is $0$ then the Cartan method terminates and the
transitive part of the symmetry Lie 
algebra is equivalent to 
\[\begin{pmatrix}
0 & 0 & X \\
X^t I_{p,q} & 0 & 0 \\
0 & 0 & 0
\end{pmatrix}. \]
The exponent of this matrix is
\begin{equation}
\label{eq2}
\begin{pmatrix}
1_n & 0 & X \\
X^t I_{p,q} & 1 & \frac12\sum_{i=1}^p x_i^2 - \frac12\sum_{i=p+1}^n 
x_i^2 \\
0 & 0 & 1
\end{pmatrix}. 
\end{equation}
The action of \eqref{eq2} gives the surface \eqref{srf:1} after 
rescaling 
$x_n$.
\end{proof}
Using $I_{p,q}$, the trace-free part of $L_1$ can be identified with a symmetric trace-free 3-tensor.  To complete the third step of Cartan's moving frame method we need to classify these tensors up to the action of $CO_{p,q}^+$. However let us first study some special properties of the surfaces that are parts of boundaries of homogeneous affine domains.

\section{Boundary of homogeneous affine domains}\label{S:bound}

Our final goal is the classification of affinely homogeneous tube domains with Levi non-degenerate boundary in $\C^{n+1}.$  Affine surfaces that are parts of boundaries of affinely homogeneous domains in $\R^n$ have $L_1$ of a special type, as the following proposition shows.

\begin{prop}\label{P:tubeDom}
A homogeneous affine surface with non-zero trace-free semi-invariant $L_1$ is a part  of a boundary of a homogeneous affine domain if and only if the projection of the isotropy subalgebra $\h_0\subset\cso_{p,q}(\R)$ on the center of $\cso_{p,q}(\R)$ is non-zero. Or, equivalently, if 
the projection of $\h_0$ on $\gso_{p,q}(\R)$ acts on $L_1$ by scalars that are not all zeros.
\end{prop}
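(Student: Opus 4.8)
The plan is to argue near a fixed point $p=0\in\Gamma$ in the affine coordinates used above, so that $T_p\Gamma=\{x_{n+1}=0\}=\R^n\op\{0\}$ and, by Proposition~\ref{P:cones}, the isotropy $\h_0$ lies in $CO_{p,q}^+$: every $X\in\h_0$ is the linear vector field whose matrix is $\left(\begin{smallmatrix}a+cI_n&0\\0&2c\end{smallmatrix}\right)$, with $a\in\gso_{p,q}(\R)$ and $c$ the coordinate on the centre of $\cso_{p,q}(\R)$. I would test the boundary condition against the points $q_t=(0,\dots,0,t)$: since $\Gamma$ has the form $x_{n+1}=\tfrac12(x,x)_{p,q}+\dots$, the $x_{n+1}$-axis meets $\Gamma$ only at the origin, so for small $t\ne0$ the point $q_t$ lies off $\Gamma$, on a prescribed local side of it.

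The computational heart of the argument is one elementary observation about the evaluation $\h\to T_{q_t}\R^{n+1}$. Write $\h=\h_0\op\h'$ with $\dim\h'=n=\dim\Gamma$, and choose a linear section $v\mapsto X_v$ of the evaluation $\h\to T_p\Gamma$, $X\mapsto X_p$. Then $X_v(q_t)=(v,0)+t\,u_v$ with $u_v$ linear in $v$, so for small $t$ the subspace $W_t:=\operatorname{span}\{X_v(q_t):v\in\R^n\}$ is the graph of a linear functional over $\R^n\op\{0\}$; in particular $\dim W_t=n$ and $W_t\cap(\{0\}\op\R)=0$. On the other hand every $X\in\h_0$ is the linear vector field above, so $X(q_t)=(0,\dots,0,2ct)\in\{0\}\op\R$. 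Hence the evaluation $\h\to T_{q_t}\R^{n+1}$ is onto if and only if $\h_0$ does not vanish at $q_t$; equivalently, some $X\in\h_0$ has non-zero central component; equivalently, the projection of $\h_0$ onto the centre of $\cso_{p,q}(\R)$ is non-zero.

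With this observation the equivalence with the boundary condition is short. If $\Gamma$ bounds a homogeneous affine domain $\Omega$, one may take $\Omega$ to be an open orbit of $\Af(\Gamma)$ adjacent to $\Gamma$, so $\h$ is transitive on $\Omega$; then the evaluation at $q_t\in\Omega$ is onto and the centre-projection of $\h_0$ is non-zero. Conversely, if some $X_0\in\h_0$ has non-zero central component $c$, then by the observation the orbit $\Omega:=H^\circ\!\cdot q_t$ is open and $H^\circ$ acts transitively on it; the flow of $X_0$ rescales $q_t$ along the $x_{n+1}$-axis towards $p$, so $p\in\overline\Omega$, and since $H^\circ$ is transitive on $\Gamma$ and preserves $\Omega$ we get $\Gamma\subset\partial\Omega$. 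Thus $\Gamma$ is a piece of the boundary of the homogeneous affine domain $\Omega$.

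Finally, to recast the condition in terms of $L_1$: since $\h_0$ stabilises the full normalised jet it fixes $L_1$, so for $X\in\h_0$ with components $(a,c)$ we have $0=X\cdot L_1=a\cdot L_1+c\,(\xi\cdot L_1)$, where $\xi$ generates the centre of $\cso_{p,q}(\R)$. A direct computation with the block matrices, exactly as in the derivation of $V_2$, shows that $\xi$ acts on $L_1$ by a non-zero scalar (one finds $L_1\mapsto E^{-1}L_1$ under the corresponding scaling, so $\xi\cdot L_1=-L_1$). Hence $a\cdot L_1=c\,L_1$: the projection of $\h_0$ onto $\gso_{p,q}(\R)$ acts on $L_1$ precisely by the scalars $\{c:(a,c)\in\h_0\}$, and since $L_1\ne0$ each such $c$ is determined by $a$, so this set is exactly the image of the centre-projection; it is therefore non-zero if and only if those scalars are not all zero. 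I expect the delicate part to be the bookkeeping rather than any single calculation: $\h'$ has dimension exactly $n$, so the transverse direction required for an open orbit cannot be furnished by the transitive part of $\h$ and must come from the isotropy — and, since the isotropy lies in $\cso_{p,q}(\R)$ and annihilates the tangent plane, only from its central (scaling) part.
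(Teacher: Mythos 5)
Your argument is correct and is essentially the paper's own proof: both test surjectivity of the evaluation of $\h$ at the normal point $p_t=(0,\dots,0,t)$, observe that the transitive part spans an $n$-plane complementary to the $x_{n+1}$-axis while the isotropy contributes only $(0,\dots,0,2ct)$, and then identify the scalar action of the $\gso_{p,q}$-part on $L_1$ with the central component via $\h_0\cdot L_1=0$. The only cosmetic difference is that the paper realises the evaluation at $p_t$ by conjugating $\h$ with the translation $A_t$ rather than by evaluating affine vector fields directly.
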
  	
\begin{proof}
At the point $p=(0,\dots,0)^t\in\Gamma$ the transitive part of the symmetry algebra $\h$ is spanned by  $V_\infty/\g_0=V_0/\g_0$. This implies that $T_p\Gamma$ is spanned by $\frac{\partial}{\partial x_i}$, $1\le i \le n$. We are going to check the action of the symmetry algebra $\h$ at the point 
$p_t=(0,\dots,0,t)^t$ that does not belong to $\Gamma$ for sufficiently small $t\neq0$. To obtain the infinitesimal action of $\h$ at the point $p_t$ we need to conjugate Lie algebra $\h$ by 
\[ A_t=\begin{pmatrix}
1_n & 0 & 0 \\
0 & 1 & t \\
0 & 0 & 1 
\end{pmatrix}.\]	
The transitive at $0$ part of $\h$ takes at $p_t$ the form
\[ A_t^{-1}.\begin{pmatrix}
L(X) & L_3(X) & X \\
X^t I_{p,q} & L_4(X) & 0 \\
0 & 0 & 0 
\end{pmatrix} . A_t =
\begin{pmatrix}
L(X) & L_3(X) & X+t L_3(X) \\
X^t I_{p,q} & L_4(X) & t L_4(X) \\
0 & 0 & 0 
\end{pmatrix},
\]
where $L(X)=L_1(X)+L_2(X)$, $L_2(X)\in\gso_{p,q}(\R).$ An arbitrary element of $\h_0$ transforms to 
\[ A_t^{-1}.\begin{pmatrix}
a_2 + e I_n & 0 & 0 \\
0 & 2 e & 0 \\
0 & 0 & 0
\end{pmatrix} . A_t =
\begin{pmatrix}
 a_2 + e I_n & 0 & 0 \\
0 & 2 e & t 2 e \\
0 & 0 & 0
\end{pmatrix}.
\]
To have a transitive action of $\h$ in a neighborhood of $p_t$ we need a projection of $\Ad_{A_t}(\h)$ on $\g/\g_0$ to be surjective. 
This is true only if $e\neq0$ for at least one element of 
$\h_0$.

Let us prove the second statement of the proposition. We remind that $G_3$ (and therefore $G_\infty$) consists of elements that fix $L_1$. Therefore an element of $\h_0=\g_\infty$ should annihilate $L_1$. Then, since  
\[ 
\begin{pmatrix}
 e I_n & 0 & 0 \\
0 & 2 e & 0 \\
0 & 0 & 0
\end{pmatrix}
\] 
scales $L_1$ by $-e$, the action of $a_2$ should scale $L_1$ by $e$.
\end{proof}
\begin{cor}\label{cor1}
	All homogeneous tubular affine surfaces have $\|L_1\|_{(p,q)}=0$, 
	where the pseudo-norm is induced from $V$ to $\Hom(S^2(V),V)$.
\end{cor}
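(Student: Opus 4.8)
The plan is to deduce Corollary~\ref{cor1} directly from the second characterization in Proposition~\ref{P:tubeDom}, namely that for a tubular surface the projection of $\h_0$ on $\gso_{p,q}(\R)$ acts on $L_1$ by a nonzero scalar. Concretely, there exists $a_2\in\gso_{p,q}(\R)$ lying in (the projection of) $\h_0$ and a nonzero $\lambda\in\R$ with $a_2\cdot L_1=\lambda L_1$, where the dot denotes the natural action of $\gso_{p,q}(\R)$ on $\Hom(S^2(V),V)\cong S^2(V)^*\otimes V$. The key point is that $a_2$ is skew-symmetric with respect to the form $I_{p,q}$, so it preserves the induced pseudo-norm $\|\cdot\|_{(p,q)}$ on every tensor space built functorially from $(V,I_{p,q})$; in particular the function $t\mapsto \|L_1\|_{(p,q)}$ is constant along the flow $\exp(t a_2)$ acting on tensors.

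The steps, in order, would be: (i) recall that $\gso_{p,q}(\R)=\mathfrak{so}(V,I_{p,q})$ acts on $\Hom(S^2 V,V)$ by derivations preserving the nondegenerate bilinear form $\langle\cdot,\cdot\rangle_{(p,q)}$ that $I_{p,q}$ induces there; hence $\langle a_2\cdot T,T\rangle_{(p,q)}=0$ for every tensor $T$ and every $a_2\in\gso_{p,q}(\R)$. (ii) Apply this to $T=L_1$ with the particular $a_2$ coming from $\h_0$: since $a_2\cdot L_1=\lambda L_1$, we get $\lambda\langle L_1,L_1\rangle_{(p,q)}=\langle a_2\cdot L_1,L_1\rangle_{(p,q)}=0$. (iii) Because $\lambda\neq0$ by Proposition~\ref{P:tubeDom}, conclude $\langle L_1,L_1\rangle_{(p,q)}=0$, i.e. $\|L_1\|_{(p,q)}=0$. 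One should also dispatch the trivial case: if the trace-free part of $L_1$ vanishes then by Proposition~\ref{P:cones} the surface is the quadric and there is nothing to prove, so we may assume $L_1\neq0$ and the statement is genuinely about an isotropic tensor rather than a zero tensor.

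I expect the only real subtlety to be bookkeeping about which incarnation of $L_1$ and which group/algebra is acting: $L_1$ naturally lives in $\gl_n(\R)/\gso_{p,q}(\R)$ but, via $I_{p,q}$, is identified with a symmetric trace-free $3$-tensor, and it is on this $3$-tensor that $\gso_{p,q}(\R)$ acts and on which the pseudo-norm is defined. One must make sure the scalar $\lambda$ by which $a_2\in\h_0$ scales $L_1$ in the sense of Proposition~\ref{P:tubeDom} is the same scalar appearing in step (ii); this is immediate since both refer to the same linear action of $\gso_{p,q}(\R)$ on the tensor $L_1$. A second point to state carefully is that $\exp(a_2)\in O_{p,q}$, hence lies in $CO_{p,q}^+$, which by Proposition~\ref{P:cones} contains the isotropy group $G_\infty$; so the element $a_2\in\h_0$ is indeed available and acts on $L_1$ as an infinitesimal isometry. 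Beyond these identifications, the argument is the one-line observation that a skew operator cannot have a nonzero real eigenvalue on a vector of nonzero norm.
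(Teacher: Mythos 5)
Your proof is correct and is essentially the paper's own argument: the paper takes the full isotropy element $Z$ with nonzero conformal weight $e$ (supplied by Proposition~\ref{P:tubeDom}), observes that $Z$ must annihilate $L_1$ while rescaling its pseudo-norm by $e$, and concludes $\|L_1\|_{(p,q)}=0$. You merely isolate the $\gso_{p,q}(\R)$-component $a_2$ and phrase the same computation as ``a skew operator has no eigenvector of nonzero norm with nonzero real eigenvalue,'' i.e.\ the identical calculation $\lambda\|L_1\|^2=\langle a_2\cdot L_1,L_1\rangle_{(p,q)}=0$.
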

\begin{proof}
Consider an element of the form
\[ Z = \begin{pmatrix}
a_2 + e I_n & 0 & 0 \\
0 & 2 e & 0 \\
0 & 0 & 0
\end{pmatrix}  \]
with $e\neq 0$ from the isotropy subalgebra of the surface. Such an element always exists according to Proposition \ref{P:tubeDom}. If $\|L_1\|\neq 0$ then $Z$ scales the norm of $L_1$ by $e$ and therefore cannot preserve the tensor $L_1$.
\end{proof}

Theorem 1 is a direct consequence of Corollary \ref{cor1}. Indeed, if the second fundamental form is definite then $\|L_1\|=0$ implies $L_1=0$. All surfaces with non-degenerate second fundamental form and $L_1=0$ are described in Proposition \ref{P:cones}.

\section{ Affine surfaces with Lorentzian second fundamental form} \label{S:Lorenz}

In this section we prove Theorem \ref{thm2}. 
From now on we assume that all homogeneous affine surfaces we study are parts of a boundary of an affinely homogeneous domain unless specified otherwise. We continue the 3rd 
step of Cartan's equivalence method assuming that the trace-free part of $L_1$ is non-zero. 
We normalize $L_1$ by scaling and choosing an 
appropriate representative in the $O_{p,q}(\R)$ orbit of $L_1$. According to Proposition \ref{P:tubeDom}, we need to find a tensor in $S^3(V)=S^3(\R^{p,q})$ on which at least one element from $\gso_{m-1,1}$ acts by a non-zero scalar.

We recall that the form $I_{n-1,1}$ is defined by
\[ (x_1,x_n)=1,\quad (x_i,x_i)=1,\mbox{ for }2\le i \le n-1 \]
and all other pseudo-scalar products between elements of the basis $\{ x_i \}$ are zero.
\begin{prop}\label{P:L1}
Consider a homogeneous affine surface $\Gamma$ with Lorentzian second fundamental form that is a part of a boundary of an affinely homogeneous domain. If the trace-free part of $L_1$ for $\Gamma$ is not zero then $L_1$ lies in the the orbit of exactly one element from the list
\begin{enumerate}
\item $ x_1^3 $,
\item $ 3 x_1^2 x_2 $,
\item $ 3 x_1\left(\sum_{i=2}^{n-1}\alpha_i x_i^2\right),$ where $\alpha_2=1$, $\alpha_i\ge \alpha_{i+1}$ and $|\alpha_i|\le 1$,
\end{enumerate}
under the action of $G_2=CO_{p,q}^+(\R)\times \R^n$.
\end{prop}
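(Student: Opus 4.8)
The plan is to classify symmetric trace-free $3$-tensors in $S^3(\R^{n-1,1})$, up to the action of $CO^+_{n-1,1}(\R)$, subject to the additional constraint coming from Proposition~\ref{P:tubeDom}: at least one element of $\gso_{n-1,1}$ must act on $L_1$ by a non-zero scalar. Write $L_1 = \sum c_{ijk}\, x_i x_j x_k$ as a cubic form. The key structural fact to exploit is that the rescaling element $e\cdot I_n$ scales $L_1$ by $-e$, so by Corollary~\ref{cor1} the pseudo-norm vanishes, and moreover some $\gso_{n-1,1}$-element $a_2$ must scale $L_1$ by $e\neq 0$. The first step is to understand which elements of $\gso_{n-1,1}$ can act semisimply with a non-zero eigenvalue on a given tensor: in the Lorentzian signature the relevant element is (up to conjugation) the generator of a boost in the $(x_1,x_n)$-plane, i.e. the grading element $x_1\mapsto x_1$, $x_n\mapsto -x_n$, $x_i\mapsto 0$ for $2\le i\le n-1$. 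Thus after applying an $O_{n-1,1}$-transformation we may assume $L_1$ is a weight vector for this boost. This forces $L_1$ to be a sum of monomials $x_1^a x_n^b \cdot(\text{quadratic in }x_2,\dots,x_{n-1})$ all of the same weight $a-b$.

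The second step is to determine the possible weights. Since the boost must scale $L_1$ by a \emph{non-zero} scalar, the weight $a-b$ is non-zero; by replacing the boost by its negative (or swapping $x_1\leftrightarrow x_n$) we may assume the weight is positive. Combined with the tracelessness condition with respect to $I_{n-1,1}$ — where the trace pairs $x_1$ with $x_n$ and $x_i$ with itself for $2\le i\le n-1$ — and the fact that $L_1$ is degree $3$, a short enumeration of the admissible monomials shows the weight can only be $1$, $2$ or $3$, giving respectively the three shapes: (i) weight $3$: $L_1$ is a multiple of $x_1^3$; (ii) weight $2$: $L_1 = x_1^2(\lambda x_1 + \mu x_2 + \dots)$ with the $x_1^3$ piece removable, leaving $x_1^2$ times a linear form in $x_2,\dots,x_{n-1}$, which an $O_{n-2}$-rotation in the $x_2,\dots,x_{n-1}$ block brings to $3x_1^2 x_2$; (iii) weight $1$: $L_1 = x_1 \cdot Q(x_2,\dots,x_{n-1})$ plus possibly $x_1^2 x_j$-type or $x_1^3$ terms of the wrong weight which are excluded, where $Q$ is a quadratic form, and tracelessness kills the $x_1^2 x_n$ contribution; diagonalizing $Q$ by the residual $O_{n-2}$ and rescaling gives $3x_1(\sum \alpha_i x_i^2)$ with $\alpha_2 = 1$ (after the overall scaling), $\alpha_i \ge \alpha_{i+1}$, and $|\alpha_i|\le 1$.

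The third step is to pin down the residual normalizations and verify the orbits are genuinely distinct. Here one must carefully track what subgroup of $CO^+_{n-1,1}$ preserves each of the three shapes: the stabilizer of the boost direction is (up to the compact factor) the parabolic generated by the boost itself, the $O_{n-2}$ acting on $x_2,\dots,x_{n-1}$, the "translations" $x_n\mapsto x_n$, $x_i\mapsto x_i - t_i x_1$ (for $2\le i\le n-1$), $x_1\mapsto x_1$ and their transposes, and the overall scaling $E$. One checks that within shape (i) all non-zero multiples of $x_1^3$ are equivalent (using $E$ and the boost); within shape (ii) the linear-form normalization to $3x_1^2 x_2$ is unique; and within shape (iii) the listed inequalities on the $\alpha_i$ exhaust the $O_{n-2}\times\R^*_+$-orbits of diagonal quadratic forms, with the sign and ordering conventions removing the remaining ambiguity. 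Finally one confirms the three shapes cannot be conjugate to one another by a full $CO^+_{n-1,1}$-element, which follows because the weight of $L_1$ under any semisimple element scaling it is an invariant (distinguishing cases by whether the "core" quadratic factor is rank $0$, rank $1$ linear, or a genuine quadratic).

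The main obstacle I expect is the third step: ruling out "exotic" weight vectors where the boost is replaced by a non-conjugate semisimple element of $\gso_{n-1,1}$ — in Lorentzian signature the Cartan subalgebra is one-dimensional (generated by the single boost) once we pass to $\gso_{n-1,1}$, so the classification of semisimple elements up to conjugacy is manageable, but one must be careful that a nilpotent-plus-semisimple element cannot scale $L_1$ unless its semisimple part already does, and that mixed behaviour (some component scaled, others not) is incompatible with $L_1$ being a single weight vector. The enumeration of monomials and the reduction of the quadratic factor are routine once the weight structure is fixed.
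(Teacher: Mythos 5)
Your proposal follows essentially the same route as the paper: use the tube-domain condition to produce a semisimple element of $\gso_{n-1,1}$ acting on $L_1$ by a non-zero real scalar, conjugate it (real rank one) to the boost in the $(x_1,x_n)$-plane, decompose $S^3(V)$ into boost-weight spaces so that $L_1$ is a weight vector of weight $3$, $2$ or $1$ after swapping $x_1\leftrightarrow x_n$, and normalize each case by the residual parabolic and $O_{n-2}$. Two small imprecisions to fix: what is one-dimensional is the maximal split toral subalgebra of $\gso_{n-1,1}$, not its Cartan subalgebra (the paper argues via the two conjugacy classes of maximal toral subalgebras, excludes the compact one, and then shows the nilpotent part of the Jordan decomposition must vanish because everything commuting with the boost is semisimple); and in the weight-$1$ case tracelessness does \emph{not} kill the $x_1^2x_n$ term unless $\sum_i\alpha_i=0$ (which the normal form does not require) --- instead one uses the translation factor $\R^n$ of $G_2$ to set $\beta=0$ in place of the trace-free gauge, which is exactly the convention the paper adopts.
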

\begin{proof}
Consider $T\in\gso_{m-1,1}$ that acts as a non-zero scalar on $L_1$. 
Let $T=T_s+T_n$ be the abstract Jordan-Chevalley decomposition of $T$, where 
$T_s$ is semisimple, $T_n$ is nilpotent and $[T_s,T_n]=0$. This 
decomposition is unique. The semisimple element $T_s$ belongs to a maximal toral subalgebra. In $\gso_{n-1,1}$ there are just two of them: the maximally compact one and the maximally non-compact one. Elements of the maximally compact toral subalgebra can not have real eigenvalues other than $0$ in any representation of $\gso_{n-1,1}$. Consequently they can not scale $L_1$. Therefore and due to Proposition \ref{P:tubeDom} the semisimple part  $T_s$  belongs to the maximally non-compact toral sub-algebra. Since all maximally non-compact toral sub-algebras are conjugate we can assume that
\[ T_s=\begin{pmatrix}
\lambda & 0 & 0 \\
0 & T_{m-2} & 0 \\
0 & 0 & -\lambda
\end{pmatrix}
\]
where $T_{m-2}\in \gso_{m-2}(\R)$, $\lambda\neq 0$. All commuting with $T_s$ elements of $\gso_{m-1,1}$ are of the form
\[ \begin{pmatrix}
\tilde\lambda & 0 & 0 \\
0 & \tilde T_{m-2} & 0 \\
0 & 0 & -\tilde\lambda
\end{pmatrix}
\]
and therefore $T_n$ is of this form as well. But this means that $T_n$ is in fact semisimple, i.e, $T_n=0$. 

Let $T=T_s$ with $\lambda=1$.
Tensor $L_1$ should belong to one of the real eigenspaces of $T$. All 
eigenvalues of the action of $T$ on $S^3(V)$ are $\pm 3$, $\pm 2$, $\pm 
1$ and $0$. The corresponding eigenvectors are $x_1^i x_n^j f(\bar x)$ where $\bar x=(x_2,\dots,x_{n-1})^t$. 
The eigenvalue of $x_1^i x_n^j f(\bar x)$ is $(i-j)$. The group $O_{m-1,1}(\R)$ 
contains an element that exchanges $x_1$ and $x_n$. 
Since $T$ acts as a non-zero scalar the analysis reduces to orbits in the positive eigenvalue spaces.
We have 3 different cases:
\begin{enumerate}
\item \textbf{Eigenvalue $3$.}\label{c:1} The only eigenvector is 
proportional to $x_1^3$. The stabilizer group is a minimal parabolic subgroup of $O_{m-1,1}$,  i.e. a minimal subgroup of $O_{m-1,1}$ the complexification of which contains a Borel subgroup.
\item \textbf{Eigenvalue $2$.} Eigenvectors are of the form 
$x_1^2f(\bar x)$ where $f$ is linear. One can normalize any such vector to 
$x_1^2 x_2$ using the action of $O_{n-2}(\R)$. The stabilizer group is the subgroup of a minimal parabolic from \eqref{c:1} that fixes $x_1$.
\item \textbf{Eigenvalue $1$.} Eigenvectors are of the form 
$x_1q(\bar x)+\beta x_1^2 x_n$ where $q$ is a quadratic form. In this particular case it is 
convenient to normalize $L_1$ not to trace-free form but to the form 
with $\beta=0$ . We normalize $q$ is
to $q(\bar x)=3\sum_{i=2}^{n-1} \alpha_i x_i^2$ by 
orthogonal transformations and we scale the first coefficient $\alpha_1$ to~$1$. The stabilizer of $L_1$ is the subgroup of a minimal parabolic group from \eqref{c:1} that fixes $q$.
\end{enumerate}
\end{proof}
Proposition \ref{P:L1} completes the third step of Cartan's moving frame method. 
The stabilizer Lie groups for $V_2$ and the corresponding Lie algebras  are
\begin{align*}
G_3 &= \begin{pmatrix}
E\cdot A_3 & 0 & 0
\\
0 & E^2 & 0
\\
0 & 0 & 1
\end{pmatrix},\quad A_3\in O_{p,q}(\R), A_3.L_1=e\cdot L_1
\\
\g_3 &=  \begin{pmatrix}
a_3 +e & 0 & 0
\\
0 & 2 e & 0
\\
 0 & 0 & 0
\end{pmatrix},
\quad a_3\in\gso_{p,q}(\R), a_3.L_1=e\cdot L_1
\end{align*}

We proceed with the 4th step of Cartan's equivalence method 
using the obtained tensors $L_1$. We will need an action of the scaling 
component of $CO_{p,q}^+(\R)$ on the transitive part of the symmetry algebra.  Using 
the action of
\[ A_E=\begin{pmatrix}
  E I_n & 0 & 0 \\
0 & E^2 & 0\\
0 & 0 & 1
\end{pmatrix}
\]
we have
\[\Ad_{A_E^{-1}} \begin{pmatrix}
L(X) & L_3(X) & X \\
X^t I_{n-1,1} & L_4(X) & 0 \\
0 & 0 & 0 
\end{pmatrix}  = \begin{pmatrix}
L(X) & E L_3(X) & \frac1E X \\
\frac1E X^t I_{n-1,1} & L_4(X) & 0 \\
0 & 0 & 0 
\end{pmatrix},
\]
where $L(X)=L_1(X)+L_2(X)$, $L_2(X)\in\gso_{p,q}(\R).$ Substituting $\frac1E X$ with $Y$ we obtain the answer
\[ \begin{pmatrix}
E L(Y) & E^2 L_3(Y) & Y \\
Y^t I_{n-1,1} & E L_4(Y) & 0 \\
0 & 0 & 0 
\end{pmatrix}.
\]

Consider the three cases from Proposition \ref{P:L1}.

\textbf{1. $L_1=x_1^3$.} The stabilizer $G_3$ of $L_1$ coincides with the 
stabilizer of $x_1$, which is
\begin{equation}\label{s3:e1} \begin{pmatrix}
  Z^3 A_3 & 0 & 0 \\
0 & Z^6 & 0\\
0 & 0 & 1
\end{pmatrix}  
\end{equation}
where 
\[ A_3 = \begin{pmatrix}
  Z & 0 & 0  \\
0 & D_{n-2} \\
0 & 0 & Z^{-1}
\end{pmatrix} .
\begin{pmatrix}
  1 & -Q^t_{n-2} & -|Q_{n-2}|^2/2 \\
0 & 1 & Q_{n-2}\\
0 & 0 & 1
\end{pmatrix},
 \]
and  $D_{n-2}\in O_{n-2}(\R),$ $Q_{n-2}\in \R^{n-2},$ $Z\in R^*_+$. The 
corresponding 
Lie algebra has the form
\[ 
\g_3=\begin{pmatrix}
 4z & -q^t & 0 & 0 & 0 \\
0 & 3z + d_{n-2} & q & 0 & 0\\
0 & 0 & 2z & 0 & 0 \\
0&0&0& 6z &0 \\
0&0&0&0&0 
\end{pmatrix}.
 \]
We find that all possible $V_3\in\Gr_n(\g/\g_3)$ are
 	\[ V_3 =  \begin{pmatrix}
 	 	L_1(X)+L_2(X) & L_3(X) & X \\
 	 	(I_{n-1,1}X)^t & * & 0  \\
 	 	0 & 0 & 0  \\
 	\end{pmatrix},  \]
 	where $L_3\in \Hom(\R^n,\R^n)$ and $L_2(X)$ has the form
\[ \begin{blockarray}{cccc}
{\scriptstyle 1} & {\scriptstyle n-2} &  {\scriptstyle 1} \\
\begin{block}{(ccc)c}
W(X) & 0 & 0 & {\scriptstyle 1} \\
P(X) & 0 & 0 & {\scriptstyle n-2} \\
0 & -P(X)^t & -W(X) &  {\scriptstyle 1} \\
\end{block}
\end{blockarray}. \]
According to the proof of Proposition \ref{P:L1}, the isotropy group $G_\infty$ and therefore the stabilizer $G_4$ of $V_3$ 
contains an element $T$ of the form \eqref{s3:e1} with $Z\neq 1$ and
\[  A_3 = \begin{pmatrix}
  Z & 0 & 0  \\
0 & D_{n-2} \\
0 & 0 & Z^{-1}
\end{pmatrix} .\]  
In order to preserve $V_3$ the element $T$ must preserve the tensors $L_2$ and $L_3$. Therefore, since
\[ T_1=\begin{pmatrix}
  Z^3 & 0 & 0 \\
0 & Z^6 & 0\\
0 & 0 & 1
\end{pmatrix}  \]
scales $L_2$ by $Z^3$, $L_3$ by $Z^6$ and $T$ is a product of $T_1$ and $A_3$, the element $A_3$ scales $L_2$ 
and $L_3$ by $Z^{-3}$ and $Z^{-6}$, respectively. But this is impossible unless $L_2= 0$, $L_3=0$, since $L_2$, $L_3$ have eigenvalues with respect to $A_3$ in the range $\{K^2,K,1,K^{-1},K^{-2}\}$.
We conclude that $L_2$ and $L_3$ are $0$ and Cartan's equivalence method terminates in this case. As a result there exists only one tubular surface with $L_1=x_1^3$ which is a part of a boundary of a homogeneous domain.

The normal form of the equation with symmetry algebra $\h=V_\infty\oplus\g_\infty=V_2\oplus\g_3$ is
\begin{equation}\label{srf:2}
 x_{n+1}=x_1 x_{n} + \sum_{i=2}^{n-1} x_i^2 + x_{1}^3;
\end{equation}

\textbf{2. $L_1=3 x_1^2 x_2$.}  The stabilizer $G_3$ of $L_1$ in this 
case 
consists of transformations fixing $x_1$ and $x_2$:

\begin{equation}\label{s3:e2} \begin{pmatrix}
  Z^2 A_3 & 0 & 0 \\
0 & Z^4 & 0\\
0 & 0 & 1
\end{pmatrix},  
\end{equation}
where 
\[ A_3 = \begin{pmatrix}
  Z & 0 & 0 & 0  \\
  0 & 1 & 0 & 0 \\
0 & 0 & D_{n-3} &  0 \\
0 & 0 & 0 & Z^{-1}
\end{pmatrix} .
\begin{pmatrix}
1 & 0 & -Q^t_{n-3} & -|Q_{n-3}|^2/2 \\
0& 1 & 0 & 0 \\  
0 & 0 & 1  & Q_{n-3}\\
0 & 0 & 0 & 1
\end{pmatrix}
 \]
and  $D_{n-3}\in O_{n-3}(\R),$ $Q_{n-3}\in \R^{n-3},$ $Z\in R^*_+$. The 
corresponding 
Lie algebra has the form
\[ 
\g_3=\begin{pmatrix}
 3z & 0 & -q^t_{n-3} &  0 & 0 & 0 \\
 0 & 2z & 0 & 0 & 0 & 0\\
0 & 0 & 2z + d_{n-3} & q_{n-3} & 0 & 0\\
0 & 0 & 0 & z & 0 & 0 \\
0&0&0& 0 & 4z & 0 \\
0&0&0&0&0 & 0
\end{pmatrix}.
 \]
We find that all possible $V_3\in\Gr_n(\g/\g_3)$ are
 	\[ V_3 =  \begin{pmatrix}
 	 	L_1(X)+L_2(X) & L_3(X) & X \\
 	 	(I_{n-1,1}X)^t & * & 0  \\
 	 	0 & 0 & 0  \\
 	\end{pmatrix},  \]
 	where $L_3\in \Hom(\R^n,\R^n)$ and $L_2$ has the form
\[ \begin{blockarray}{ccccc}
{\scriptstyle 1} & {\scriptstyle 1} & {\scriptstyle n-3} &  
{\scriptstyle 1} \\
\begin{block}{(cccc)c}
Z(X) & -Q_1(X) & 0 & 0 & {\scriptstyle 1} \\
P_1(X) &  0 & -S(X)^t & Q_1(X) & {\scriptstyle 1} \\
P_2(X) &  S(X) & 0 & 0 &  {\scriptstyle n-3} \\
0 & -P_1(X) & -P_2(X)^t & -Z(X) &  {\scriptstyle 1} \\
\end{block}
\end{blockarray} .\]

By the same argument as in the first case, the isotropy group contains at least one element \eqref{s3:e2} with
\[ A_3 = \begin{pmatrix}
  Z & 0 & 0 & 0  \\
  0 & 1 & 0 & 0 \\
0 & 0 & D_{n-3} &  0 \\
0 & 0 & 0 & Z^{-1}
\end{pmatrix}. \]
Since
\[ \begin{pmatrix}
  Z^2 & 0 & 0 \\
0 & Z^4 & 0 \\
0 & 0 & 1
\end{pmatrix}  \]
scales $L_2$ by $Z^2$ and $L_3$ by $Z^4$, $A_3$ must scale $L_2$ and $L_3$ by $Z^{-2}$ and $Z^{-4}$. Checking the eigenvalues we see that $L_3=0$ and the only possible $L_2$ has $Q_1= \beta\cdot x_n^*\ot x_1$ and all other parts equal to $0$. Since $G_3$ already preserves 
$ x_n^*\ot x_1 \sim x_1^2$ the Cartan equivalence method stops. We obtain a 1-parameter family of tubular surfaces.

The normal form of the corresponding equation is 
\begin{equation}\label{srf:3} 
x_{n+1} = x_1 x_{n} + \sum_{i=2}^{n-1} x_i^2 + x_{1}^2 x_2 + 
\alpha 
x_{1}^4;
\end{equation}
with $\alpha=\frac17\beta.$

\textbf{3. $L_1=3x_1\left(\sum_{i=2}^{n-1}\alpha_i 
x_i^2\right)=3x_1q(\bar x)$.}

First we 
notice that $q(\bar x)$ does not
decompose into a product of two linear terms, one of which is light-like.
As a consequence the stabilizer $G_3$ must preserve $x_1$ as well as the quadratic form $q$.
Consider the action of \[ K = \begin{pmatrix}
  Z & 0 & 0  \\
0 & D & 0\\
0 & 0 & Z^{-1}
\end{pmatrix} .
\begin{pmatrix}
  1 & -Q^t & -|Q|^2/2 \\
0 & 1 & Q\\
0 & 0 & 1
\end{pmatrix},
 \] on $\alpha=\diag( \alpha_1,\dots,\alpha_{m-2})\in 
 \Hom(\R^{m-2},\R^{m-2})$ which represents the bilinear form $q$:
 \[ K^t  \begin{pmatrix}
 0 & 0 & 0 \\
 0 & \alpha & 0 \\
 0 &0 & 0
 \end{pmatrix} K =
 \begin{pmatrix}
 0 & 0 & 0 \\
  0 & D^t\alpha D & Z^{-1} D^t Q^t\alpha Q \\
  0 & Z^{-1}  Q \alpha Q^t D & Z^{-2} \sum_i \alpha_i(Q_i)^2)
  \end{pmatrix}.\]
   Since $Z^{-1} D^t  Q^t\alpha Q =0$ iff $ Q^t\alpha Q=0$ we obtain that 
   $Q_i\neq 0$ only if $\alpha_i=0$. This condition also implies 
   $\sum_i \alpha_i(Q_i)^2=0$. Therefore, the stabilizer $G_3$ of 
   $L_1$ consists of elements 
   \[\begin{pmatrix}
     Z A_3 & 0 & 0 \\
   0 & Z^2 & 0\\
   0 & 0 & 1
   \end{pmatrix}, \]
   where $A_3$ is formed by elements $K$ with the restrictions: $Q_i = 0$ if $\alpha_i\neq0$, 
   $D\in \prod_k O(\R^{i_k})$ and $i_k$ is the number of equal 
   $\alpha_i$. Note that in the generic case, when $\alpha_i\neq 0$, 
   $\alpha_i\neq\alpha_j$ for $i\neq j$, the stabilizer is 1-dimensional. There are 
   only two subgroups of dimension $\frac{(n-2)(n-3)}2+1$ for $n\ge 4$ that can stabilize some $\alpha$. First, the 
   stabilizer of $\alpha=\diag(1,0,\dots,0)$, which is 
   $CO_{n-3}^+(\R)\times\R^{n-3}$ and second, the 
   stabilizer of $\alpha=\diag(1,1,\dots,1)$, which is $CO_{n-2}^+(\R)$. We will use this fact later after exploring the possible types of $V_3\in\Gr_n(\g/\g_3)$.  
   
Below we assume that $\bar x,\bar y\in\langle 
    x_2,\dots,x_{n-1}\rangle$. 
Consider
\[ V_3(X) =  \begin{pmatrix}
 	 	L_1(X)+L_2(X) & L_3(X) & X \\
 	 	(I_{p,q}X)^t & * & 0  \\
 	 	0 & 0 & 0  \\
 	\end{pmatrix}.  \]
Applying Proposition \ref{P:L1}, similarly to the previous cases, we obtain 
 \[ L_2(x_1)=\begin{pmatrix}
   z & 0 & 0  \\
 0 & d & 0 \\
 0 & 0 & -z
 \end{pmatrix}, \,\, L_2(\bar x)=\begin{pmatrix}
      0 & -q^t(\bar x) & 0  \\
    0 & 0 & q(\bar x) \\
    0 & 0 & 0
    \end{pmatrix},   \]	
     where $d\in \gso_{n-2}/\prod_k \gso_{i_k}$, $q_i(\bar x)=0$ if 
     $\alpha_i=0$, $L_2(x_1)=0$ and 
     $L_3=\beta x_n^*\otimes x_1$. Let 
us calculate the restrictions imposed by Proposition \ref{CS} for the current step of Cartan's moving frame method. The Lie bracket of 2 elements in an $n$-dimensional plane $V_3$ is 
\[ [V_3(X),V_3(Y)]= \begin{blockarray}{(ccc)}
T(X,Y) & * & L_2(X)Y- L_2(Y)X  \\
* & * & 0 \\
0 & 0 & 0
\end{blockarray}, \]
where
\[ T(X,Y) = [L_1(X)+L_2(X),L_1(X)+L_2(X)]+L_3(X)Y^t I-L_3(Y)X^t I.\]
Proposition \ref{CS} implies that the symmetric part of $T(X,Y)$ is equal to 
\[L_1( L_2(X)Y- L_2(Y)X).\]
We denote by $Z(X,Y)$ the expression $L_2(X)Y - L_2(Y)X$. Recall that 
 \[L_1(x_1)=0,\,\,
   L_1(x_i)= \alpha_i( x_i^*\ot x_1 + x_n^*\ot x_i),\,\, 
   L_1(x_n)=\sum_i \alpha_i x_i^*\ot x_i. \]
We have to consider 4 cases:
\begin{enumerate}
\item $[V_3(x_1),V_3(x_n)]$. We have $L_1(x_1)=L_2(x_1)=L_3(x_1)=0$ and 
$Z(x_1,x_n)=-L_2(x_n)x_1=-z x_1$. Therefore $T(x_1,x_n)=L_1(-z x_1)=0$. Since
$T(x_1,x_n)=-L_3(x_n)x_1^tI_{n-1,1}=\beta x_1\ot x_n^t$ we conclude $\beta=0$ and hence $L_3=\beta x_n^*\otimes x_1=0$. 
\item $[V_3(\bar x),V_3(\bar y)]$. In this case $L_2(\bar x).\bar y =0$ and $ 
[L_1(\bar x)+L_2(\bar x),L_1(\bar y)+L_2(\bar y)]_{sym}=2\left((\alpha(\bar 
y),q(\bar x))- (\alpha(\bar 
x),q(\bar y))\right)x_n^*\ot x_1=0$, which means that $q$ is a symmetric 
operator with respect to the form $\alpha$, or, in other words, 
$\alpha\circ q$ has a symmetric matrix.
\item $[V_3(x_1),V_3(\bar x)].$ Everything vanishes in this case and 
we do not obtain any restrictions.
\item $[V_3(x_n),V_3(\bar x)]$. In this case $Z(x_n,\bar x)=d\bar x 
- q(\bar x)$ and
\begin{multline*}
[L_1(x_n)+L_2(x_n),L_1(\bar x)+L_2(\bar x)]_{sym}=
[L_1(x_n),L_2(\bar x)]+[L_2(x_n),L_1(\bar x)] =\\
\left[\begin{pmatrix}
     0 & 0 & 0 \\
   0 & \alpha & 0\\
   0 & 0 & 0
   \end{pmatrix}
   ,
  \begin{pmatrix}
    0 & -q^t(\bar x) & 0 \\
    0 & 0 & q^t(\bar x)\\
    0 & 0 & 0
   \end{pmatrix}
\right]
+
\left[\begin{pmatrix}
     z & 0 & 0 \\
   0 & d & 0\\
   0 & 0 & -z
   \end{pmatrix}
   ,
  \begin{pmatrix}
    0 & \alpha^t(\bar x) & 0 \\
    0 & 0 & \alpha(\bar x)\\
    0 & 0 & 0
   \end{pmatrix}
\right]
\\
=  \begin{pmatrix}
    0 & \left(\alpha\circ q(\bar x)+(z+d)\circ \alpha(\bar x)\right)^t 
    & 0 \\
    0 & 0 & \alpha\circ q(\bar x)+(z+d)\circ \alpha(\bar x)\\
    0 & 0 & 0
   \end{pmatrix}.
\end{multline*}
Subtracting $L_2(Z(x_n,\bar x))$ from the expression above we obtain the relation:
\begin{equation}\label{eq5}
2\alpha\circ q + z\alpha + [d,\alpha]=0
\end{equation}
\end{enumerate}

Now we are able to list all surfaces with isotropy of dimension  $\frac{(n-2)(n-3)}2+1$ with $L_1$ of the form $x_1q(\bar x)$.    The dimension of the biggest possible stabilizer for $\alpha$ and therefore of $V_3$ is exactly $\frac{(n-2)(n-3)}2+1$. It can be attained in two cases as was stated above. 

The first case is $CO_{n-3}^+(\R)\ltimes \R^{n-3}$, which stabilizes $\alpha=\diag(1,0,\dots,0)$. In this case 
$q_i(\bar x)=0$, for $i>1$, and 
$d_{ij}=0,$ for $1<i<j$. The fact that $\alpha\circ q$ is symmetric 
implies that $q_{1i}=0$ for $i>0$. Solving  $\eqref{eq5}$ yields
$z=-2 q_{11}$ and $d_{1i}=0$. The stabilizer of $L_2$ defined by these 
conditions coincides with $G_3=CO_{n-2}^+(\R)\ltimes \R^{n+2}$. Therefore Cartan's method stops here. 

It remains to check that the conditions of Proposition \ref{CS} are satisfied. We have
\begin{multline*} T(X,Y)= [L_1( X)+L_2( X),L_1( Y)+L_2(
Y)]_{skew}=
\\
[L_1(X),L_1(Y)]+[L_2(X),L_2(Y)].
\end{multline*}
The only non-trivial brackets in the expression above occur for $X=x_n$, $Y=\bar x$:
\[0=T(x_n,\bar x)-L_2(Z(x_n,\bar x))= \begin{pmatrix}
    0 & -R^t(\bar x) & 0 \\
    0 & 0 & R(\bar x) \\
    0 & 0 & 0
   \end{pmatrix},
 \]
where $R= [d,q]+\alpha^2+q^2+zq$. In this particular case we need to
compute only 
\[R_1=(0+\alpha_1^2+q_{11}^2-2q_{11}^2)\ot 
x_1^*=(\alpha_1^2-q_{11}^2)\ot x_1^*.\]
Since $\alpha_1=1$ we have 2 cases: $q_{11}=1$, $z=-2$ and $q_{11}=-1$, 
$z=2$. The corresponding surfaces are given by:
\begin{equation}\label{srf:4}  
 x_{n+1}=x_1 x_{n} + x_1\sum_{i=2}^{n-1}
x_i^2,
\end{equation}
\begin{equation}\label{srf:5} 
0=x_{n+1} -2x_{n+1}x_n - 2 x_1 x_n^2  + x_1 x_{n} + 
\frac12 \sum_{i=2}^{n-1} x_i^2.
\end{equation}

The second case ($G_3=CO^+_{n-2}(\R)$) requires similar constants $q=1$, $z=-2$ and $q=-1$, 
$z=2$ ($q$ is a scalar operator here) and the resulting surfaces are:
\begin{equation}\label{srf:6} 
 x_{n+1} = x_1 x_{n} + \sum_{i=2}^{n-1} 
x_i^2  + x_1 x_2^2,
\end{equation} 
\begin{equation}\label{srf:7} 
0=
(1-2x_n)(x_{n+1} + x_1 x_{n} +\frac12\sum_{i=2}^{n-1} x_i^2)  + x_n
x_2^2.
\end{equation} 

\section{Generalized type C domains} \label{S:typeC1}

Consider the hypersurface 
$$\Gamma := \{ x \in \R^{n+1} \mid x_{n+1}=x_1x_2 +x_1\sum\limits_{j=3}^n x_j^2, \, \mathrm{and} \, x_1 >0\},$$
which is a trivial reparametrisation of the surface \eqref{typeC1} from Theorem \ref{thm2}. The surface $\Gamma$ is an affinely homogeneous hypersurface that lies on the boundaries of two domains  in $\R^{n+1}$, namely
\begin{align*}
\Omega^{>} &= \{x\in \R^{n+1} \mid x_{n+1}> x_1x_2 +x_1\sum\limits_{j=3}^n x_j^2,\, x_1>0 \} \, \mathrm{and} \\
\Omega^{>} &= \{x\in \R^{n+1} \mid x_{n+1} < x_1x_2 +x_1\sum\limits_{j=3}^n x_j^2,\, x_1>0 \}.
\end{align*}

Let us show that that $\Omega^{>}$ and $\Omega^{<}$ are homogeneous domains in $\R^{n+1}$. 
One can easily verify that $\Omega^{>}$ and $\Omega^{<}$ are invariant under the following group $G$ of affine transformations in $\R^{n+1}$:
\begin{align*}
x_1& \mapsto qx_1 \\
x_2&\mapsto r^2(x_2 - 2 \sum\limits_{j=3}^{n}s_jx_j +t)\\
x_j &\mapsto r(x_j + s_j), \; j=3,\ldots,n \\
x_{n+1} &\mapsto qr^2(x_{n+1}+x_1\sum\limits_{j=3}^{n} s_j^2+ tx_1),
\end{align*}
where  $t, s_j$, for $j=3,\ldots,n$, are arbitrary real parameters,  $q>0$, and $r \neq 0$. 
Applying the transformation above to the point $(1,0,0,\ldots,0,1)\in\Omega^{>}$ gives 
\[ p=\bigl(q, r^2t, rs_3,\ldots, rs_n, qr^2(1+t + \sum\limits_{j=3}^{n} s_j^2)\bigr).\] 
Setting 
\begin{align*}
q &= x_1\\
r &=\frac{\sqrt{x_{n+1}-\bigl(x_1x_2 +x_1\sum\limits_{j=3}^n x_j^2\bigr)}}{\sqrt{x_1}}\\
t &= \frac{x_1x_2}{x_{n+1}-\bigl(x_1x_2 +x_1\sum\limits_{j=3}^n x_j^2\bigr)}\\
s_j &= \frac{x_j\sqrt{x_1}}{\sqrt{x_{n+1}-\bigl(x_1x_2 +x_1\sum\limits_{j=3}^n x_j^2\bigr)}}, \, j=3,\ldots, n,
\end{align*}
we see that $p=(x_1, x_2,x_3,\ldots,x_n)$ can be any prescribed point in $\Omega^{>}$.

Analogously, for $\Omega^{<}$ we take the point $(1,0,0,\ldots,0,-1)$ and setting 
\begin{align*}
q &= x_1\\
r &=\frac{\sqrt{-x_{n+1}+\bigl(x_1x_2 +x_1\sum\limits_{j=3}^n x_j^2\bigr)}}{\sqrt{x_1}}\\
t &= \frac{x_1x_2}{-x_{n+1}+\bigl(x_1x_2 +x_1\sum\limits_{j=3}^n x_j^2\bigr)}\\
s_j &= \frac{x_j\sqrt{x_1}}{\sqrt{-x_{n+1}+\bigl(x_1x_2 +x_1\sum\limits_{j=3}^n x_j^2\bigr)}}, \, j=3,\ldots, n,
\end{align*}
we map it to an arbitrary point $(x_1, x_2,x_3,\ldots,x_n) \in \Omega^{<}$.
It follows that the tube domains 
\begin{align*}
C^{>}& =\{x + \I y \, \mid \,x \in \Omega^{>}, y\in \R^{n+1} \} 
\, \\
C^{<} &=\{x + \I y \, \mid \, x \in \Omega^{<}, y\in \R^{n+1} \}
\end{align*}
in $\C^{n+1}$ are holomorphically homogeneous. The group $G$ considered as a subgroup of affine transformations of $\C^{n+1}$, combined with the translations in the imaginary parts of all variables acts transitively on either $C^{>}$ and $C^{<}$.

We note that neither $C^{>}$ nor $C^{<}$  has a bounded realisation. Indeed, $C^{>}$ contains the complex line $\{z_1=1,z_{n+1}-z_2=1, z_3=\ldots=z_n=0\}$, while $C^{<}$ contains  $\{z_1=1,z_{n+1}-z_2=-1, z_3=\ldots=z_n=0\}$.

Now we will determine the entire holomorphic automorphism groups of $C^{>}$ and $C^{<}$, which we denote by $G^{>}$ and $G^{<}$, respectively. 
Let $\tilde{\Gamma}:= \Gamma + \I \R^{n+1}$. In coordinates $z_j=x_j + \I y_j, \; j=1,\ldots, n+1$ the equation of  $\tilde{\Gamma}$ remains the same as of $\Gamma$ in $\R^{n+1}$, which is    
\begin{equation}\label{tildegamma}
 x_{n+1}=x_1x_2 +x_1\sum\limits_{j=3}^n x_j^2, \,  x_1 >0
\end{equation}
Let $\tilde{G} $ be the group generated by $G$, viewed as a subgroup of affine transformations of $\C^{n+1}$ and by imaginary translations in all coordinates. Clearly, $\tilde{\Gamma}$ is a hypersurface in $\partial C^{>} \cap \partial C^{<}$ with non-degenerate Levi form of the signature $(n-1,1)$. Since the Levi form of $\tilde{\Gamma}$ has both positive and negative eigenvalues every element of $G^{>}$ and $G^{<}$ extends holomorphically to a biholomorphic mapping of a neighbourhood of the reference point $p_0$, preserving $\tilde{\Gamma}$. 

Denote the subgroup of $G$ of the elements with $r=1$ by $H$. This subgroup acts simply transitively on $\Gamma$.
Thus, the subgroup $\tilde{H} \subset \tilde{G}$ that is generated by $H$ and the imaginary translations acts simply transitively on $\tilde{\Gamma}$. Therefore any element  $g$ of either $G^{>}$ and $G^{<}$ can be uniquely represented as $g= h\circ t$, where $h \in  \tilde{H}$ and $t$ is an element of the isotropy subgroups $I_{p_0}^{>} \subset G^{>}$ and $I_{p_0}^{<} \subset G^{<}$ at $p_0$, respectively. 
Therefore, to determine $G^{>}$ and $G^{<}$ it remains to describe $I_{p_0}^{>}$ and $I_{p_0}^{<}$ for an arbitrary point $p_0 \in \tilde{\Gamma}$. 

It is convenient to choose  $p_0=(1,0,0,\ldots,0,0)$ as the base point. Let $I_{p_0}$ be the group of all (local) biholomorphic automorphisms of $\tilde{\Gamma}$, defined in a neighbourhood of $p_0$ and preserving $p_0$. Since, $I_{p_0}^{>}$ and $I_{p_0}^{<}$ consist of global automorphisms, they should belong to $I_{p_0}$. (We will see later that, in fact, $I_{p_0}^{>}=I_{p_0}^{<}=I_{p_0}$). 
If equipped with the topology of uniform convergence on compact subsets of the derivatives of all orders of the component functions, the group $I_{p_0}$ is known to have the structure of a real algebraic group \cite{Bel80}. In the theorem below we determine all generators of $I_{p_0}$. 
It appears more convenient to describe the corresponding Lie algebra $\is_{p_0}$.

\begin{thm}
The Lie algebra $\is_{p_0}$ of the group $I_{p_0}$  is generated by $n^2-4n+7$ holomorphic vector fields $Y_{2n+2}, \ldots, Y_{n^2-2n+8}$ of the form
\begin{align*}
Y_{2n+2}&= 2z_2 \frac{\partial}{\partial z_2}
+ 2z_{n+1} \frac{\partial}{\partial z_{n+1}} 
+ \sum\limits_{j=3}^{n}z_j \frac{\partial}{\partial z_j}\\
R_{j,k}&= z_k \frac{\partial}{\partial z_j}-z_j \frac{\partial}{\partial z_k},\, 3\le j<k \le n,\; \mathrm{forming} \; Y_{2n+3}, \ldots Y_{\frac{n^2-n+8}{2}}\\
I_{j,k}&= \I z_k \frac{\partial}{\partial z_j}+\I z_j \frac{\partial}{\partial z_k},\, 3\le j<k \le n,\; \mathrm{forming} \; Y_{\frac{n^2-n+10}{2}},\ldots, Y_{n^2-3n+8}\\
Y_{n^2-3n+6+j}&= -\I z_j^2 \frac{\partial}{\partial z_2}+\I z_j \frac{\partial}{\partial z_j},\, j=3,\ldots,n,\; \mathrm{forming} \; Y_{n^2-3n+9},\ldots, Y_{n^2-2n+6}\\
Y_{n^2-2n+7}&= 2\I (z_1-1) \frac{\partial}{\partial z_2}
+ \I (z_1^2-1) \frac{\partial}{\partial z_{n+1}} \\
Y_{n^2-2n+8} &=  \frac{\I}{2} (z_1^2-1) \frac{\partial}{\partial z_1} + \I z_{n+1} \frac{\partial}{\partial z_2} + \I z_1z_{n+1} \frac{\partial}{\partial z_{n+1}}.
\end{align*}
\end{thm}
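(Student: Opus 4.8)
The plan is to prove the theorem in two stages: first to exhibit the listed vector fields inside $\is_{p_0}$, and then to show $\is_{p_0}$ contains nothing more.

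\textbf{The list lies in $\is_{p_0}$.} Write $\rho(z,\bar z)=x_{n+1}-x_1x_2-x_1\sum_{j=3}^{n}x_j^2$ with $x_k=\Re z_k$, so that $\tilde\Gamma=\{\rho=0\}$. A holomorphic vector field $Y=\sum_{j=1}^{n+1}f_j(z)\,\partial_{z_j}$ is tangent to $\tilde\Gamma$ exactly when $(Y+\bar Y)\rho$ is divisible by $\rho$; since $\rho$ depends only on $x$, one has $(Y+\bar Y)\rho=\sum_j\Re f_j(z)\,\partial_{x_j}\rho(x)$, so the condition is the polynomial identity $\sum_j\Re f_j(z)\,\partial_{x_j}\rho(x)\equiv 0\ (\operatorname{mod}\rho)$. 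For each generator in the list I would verify this by a short term-by-term computation, together with $f_j(p_0)=0$ for all $j$; as the generators are linearly independent (compare leading terms at $p_0$), this already gives $\dim\is_{p_0}\ge n^2-4n+7$.

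\textbf{Finiteness and reduction to linear algebra.} A direct computation shows the Levi form of $\tilde\Gamma$ at $p_0$ is the complexification of the Hessian at $(1,0,\dots,0)$ of the graphing function $x_1x_2+x_1\sum_{j=3}^{n}x_j^2$ of the base, which pairs the first two coordinates hyperbolically and is positive definite in the remaining $n-2$; hence the Levi form is nondegenerate of signature $(n-1,1)$, as stated before the theorem. By the Chern--Moser theory (cf.\ \cite{Bel80}) $I_{p_0}$ is then a finite-dimensional real algebraic group, and every $Y\in\is_{p_0}$ is a holomorphic vector field vanishing at $p_0$, uniquely determined by its $2$-jet there. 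Thus $\is_{p_0}$ is filtered by the order of vanishing at $p_0$ (elements vanishing to order $\ge 3$ being zero), and the associated graded embeds into the graded parabolic subalgebra of $\mathfrak{su}(n,2)$ attached to signature $(n-1,1)$, with grading weight $1$ on the $n$ CR-tangential directions and weight $2$ on the transversal direction $z_{n+1}-z_2$. The problem therefore reduces to the finite linear system obtained by plugging the general admissible polynomial vector field into $(Y+\bar Y)\rho\equiv 0\ (\operatorname{mod}\rho)$ and equating coefficients of the finitely many monomials in $(z,\bar z)$ that occur.

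\textbf{Solving the system.} The equations of lowest weight force the linear part of $Y$ to rescale the Levi form and to annihilate the first non-trivial CR curvature of $\tilde\Gamma$ at $p_0$; since $\tilde\Gamma$ is the tube over the affine surface \eqref{typeC1}, this curvature is carried by the cubic term $x_1\sum_{j=3}^{n}x_j^2$ of the base, i.e.\ by the trace-free semi-invariant $L_1$ of that surface computed in Section~\ref{S:Lorenz}. The Jordan--Chevalley argument from the proof of Proposition~\ref{P:L1} --- the semisimple part of a rescaling element lies in the maximally split torus, its nilpotent part is then forced to vanish, and the orbit collapses to a single representative --- applies verbatim and pins down the linear part of $Y$. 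The remaining, higher-weight equations are then resolved by back-substitution, and should leave precisely the quadratic corrections occurring in $Y_{n^2-3n+9},\dots,Y_{n^2-2n+6}$ together with the two generators $Y_{n^2-2n+7},Y_{n^2-2n+8}$ that are nonlinear in $z_1$; a direct count of these graded contributions gives $\dim\is_{p_0}=n^2-4n+7$, which with the lower bound above completes the proof.

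\textbf{Main obstacle.} The hard part will be completeness --- ruling out any further solution of the tangency relation. The delicate generators are $Y_{n^2-2n+7}$ and $Y_{n^2-2n+8}$: being nonlinear in $z_1$, they encode that the slice $\{x_1>0\}$ is itself a homogeneous half-plane on which the holomorphic continuation of $\operatorname{SL}_2(\R)$ acts, and they couple this ``$z_1$-dynamics'' to the paraboloidal part $x_1\sum_{j=3}^{n}x_j^2$ of $\tilde\Gamma$. One must carry the normalisation far enough to rule out any additional mixed nonlinear term; once the linear part is fixed, the rest is bookkeeping.
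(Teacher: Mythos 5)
Your first step (exhibiting the listed fields inside $\is_{p_0}$ by checking the tangency identity and vanishing at $p_0$, then counting) is sound and gives the lower bound $\dim\is_{p_0}\ge n^2-4n+7$. The gap is in the completeness argument. You propose to pin down the linear part of a general $Y\in\is_{p_0}$ by requiring it to annihilate ``the first non-trivial CR curvature of $\tilde\Gamma$ at $p_0$,'' which you identify with the cubic $x_1\sum_{j=3}^n x_j^2$, i.e.\ with the affine semi-invariant $L_1$ of the base, and then to apply the Jordan--Chevalley argument of Proposition~\ref{P:L1} ``verbatim.'' This conflates two different structures: $L_1$ is an invariant of the base under the \emph{affine} group $CO_{n-1,1}^+$, whereas $\is_{p_0}$ is constrained by the \emph{Chern--Moser} invariants of the tube under the full parabolic of $\mathfrak{su}(n,2)$, and the lowest such invariant is not the cubic of the base but the quartic normal-form coefficient $F_{22}=\frac18|w_1|^2\bigl(\frac{d_n}{5}\Re w_1\overline{w_2}-\frac{4}{n+2}\sum_{j\ge3}|w_j|^2\bigr)$, which must be computed from an explicit normalisation precisely because it is not read off from $L_1$. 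An argument running entirely through the affine invariant can at best recover the affine isotropy, while the content of the theorem is the excess: $Y_{n^2-2n+8}$ is not affine. So ``applies verbatim'' is unjustified, and the subsequent ``back-substitution'' and ``bookkeeping'' defer exactly the computation that constitutes the proof, in particular the exclusion of further elements in the weight-one and weight-two pieces of the parabolic.

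For comparison, the paper's route is: an explicit biholomorphism carrying $p_0$ to $0$ and $\tilde\Gamma$ into Chern--Moser normal form; explicit $F_{11},F_{22},F_{32},F_{33}$; the observation that the linear part $\D\Phi(0)$ of any $\Phi\in I_0$ must preserve $F_{22}$ up to a real rescaling; writing the general matrix $X$ of such a linear field, subtracting the span of the linear parts of the already-known isotropy subgroup $I$, and showing the remainder is forced to vanish row by row, whence $I_0=I$; and finally transporting the generators of $I_0$ back to the original coordinates to obtain the listed $Y_{2n+2},\dots,Y_{n^2-2n+8}$. If you wish to keep your more intrinsic framing, you would still need (i) the actual lowest-order CR invariant at $p_0$ in place of $L_1$, and (ii) a concrete argument excluding additional nonlinear generators beyond $Y_{n^2-2n+7}$ and $Y_{n^2-2n+8}$; neither is supplied by your sketch.
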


\begin{rem}
The notations $Y_1,\ldots, Y_{2n+1}$ are reserved for the vector fields that act transitively on $\tilde{\Gamma}$, namely:
\begin{align*}
Y_j&=\I \frac{\partial}{\partial z_j}, \, j=1,\ldots,n+1,\;\mathrm{forming} \; Y_{1},\ldots, Y_{n+1}\\
Y_{n+2}&= z_1 \frac{\partial}{\partial z_1}+z_{n+1} \frac{\partial}{\partial z_{n+1}}\\
Y_{n+j}&= -2z_j \frac{\partial}{\partial z_1}+ \frac{\partial}{\partial z_{j}},\; j=3,\ldots,n,\;\mathrm{forming} \; Y_{n+3},\ldots, Y_{2n}\\
Y_{2n+1}&= \frac{\partial}{\partial z_2}+z_{1} \frac{\partial}{\partial z_{n+1}}.
\end{align*}
\end{rem}
\begin{proof}
We first check that the biholomprphic change of variables
\begin{align*}
z_1 &= w_1+1\\
z_2&=w_2 -\frac{\I d_n}{10}w_1w_{n+1}-\sum\limits_{j=3}^{n}\frac{2(w_j)^2}{(2+w_1)^2}\\
z_j&=\frac{2w_j}{2+w_1},\; j=3,\ldots,n\\
z_{n+1}&= -\I w_{n+1}+w_2+\frac{w_1}{2}\bigl(w_2 -\frac{\I d_n}{10}(2+w_1)w_{n+1}\bigr)
\end{align*}
for $d_n=\frac{5(n-2)}{2+n}$ that maps $p$ to the point $0$ in $w$-coordinates 
transforms the equation (\ref{tildegamma}) to the following closed form implicit equation:
\begin{Small}
\begin{equation}\label{chernmosergamma}
\frac{\Im w_{n+1}}{10}=\Re \frac{4(w_1+1)\sum\limits_{j=3}^{n}|w_j|^2 +|w_1|^2(2w_2+w_1\overline{w_2})+4w_1\overline{w_2}+2(w_1)^2\overline{w_2}}{(2+w_1)(2+\overline{w_1})(20-d_nw_1\overline{w_1})}.
\end{equation}
 \end{Small}
We can now show that, in fact, expanding \eqref{chernmosergamma} and resolving it with respect to $\Im w_{n+1}$ in terms of $w'=(w_1,\ldots,w_n)$ yields an equation 
\begin{equation*}
\Im w_{n+1} =
F_{11}(w', \overline{w'}) +\sum\limits_{k,\ell \ge2}F_{k,\ell}(w', 
\overline{w'}), 
\end{equation*} 
where $F_{k,\ell}(w', \overline{w'})$  is a polynomial of bidegree $ 
(k,\ell)$ in Chern-Moser normal form \cite{CM}. 
Indeed, the Levi form of this hypersurface is
\begin{equation*}
F_{11}(w', \overline{w'}) = \frac{1}{2}\bigl(\Re w_1\overline{w_2} + \sum\limits_{j=3}^{n}w_j\overline{w_j}\bigr),
\end{equation*}
and, hence, the Chern-Moser $\tr$ operator is
\begin{equation*}
\tr = 4\bigl( \frac{\partial^2}{\partial w_1 \overline{w_2}} +\frac{\partial^2}{\partial w_2 \overline{w_1}}\bigr)
 + 2\sum\limits_{j=3}^{n}\frac{\partial^2}{\partial w_j \overline{w_j}}
\end{equation*}
The further expansion by jets yields:
\begin{align*}
F_{22}(w', \overline{w'})& =\frac{1}{8}|w_1|^2\bigl(\frac{d_n}{5}\Re w_1\overline{w_2}-\frac{4}{n+2}\sum\limits_{j=3}^{n}|w_j|^2\bigr),\\
F_{32}(w', \overline{w'})& =\frac{1}{16}w_1|w_1|^2\sum\limits_{j=3}^{n}|w_j|^2,\\
F_{33}(w', \overline{w'})& =\frac{d_n|w_1|^4}{160}\bigl(\frac{d_n}{5}(\Re w_1\overline{w_2}+ \sum\limits_{j=3}^{n}|w_j|^2) - \sum\limits_{j=3}^{n}|w_j|^2\bigr).
\end{align*}
The holomorphic vector field $Y_{n^2-2n+8}$ does not contain the variables $z_j, j=3,\ldots,n$ and integrates to the $1$-parametric group $\phi_t$ of symmetries 
\begin{align*}
z_1 & \mapsto \frac{\e^{-\I t}z_1 +\e^{-\I t} + z_1-1} {\e^{-\I t}z_1 +\e^{-\I t} - z_1+1} ,\\
z_2 & \mapsto -\frac{-\e^{-\I t}z_1 z_2 +2\e^{-\I t}z_{n+1} - \e^{-\I t}z_2 +z_2z_1-2z_{n+1}-z_2} {\e^{-\I t}z_1 +\e^{-\I t} - z_1+1},\\
\tilde{z} &   \mapsto\tilde{z}, \; \mbox{for} \; \tilde{z} =(z_3,\ldots,z_n),\\
z_{n+1} & \mapsto \frac{4\e^{-\I t}z_{n+1}} {(\e^{-\I t}z_1 +\e^{-\I t} - z_1+1)^2}.
\end{align*} 
The push-forward 
\begin{small}$$ 
\tilde Y_{n^2-2n+8} = \frac{\I}{2} (w_1^2 + 2w_1)\frac{\partial}{\partial w_1} + (\I w_2 + w_4 + \frac{\I w_1w_2}{2} -\frac{w_1^2w_4}{20} \frac{\partial}{\partial w_{2}} + \frac{\I w_1}{2}\sum\limits_{j=3}{n+1}w_j\frac{\partial}{\partial w_{j}}
$$
\end{small}
of $Y_{n^2-2n+8}$ in normal $w$-coordinates indicates that the
Jacoby matrix $\D \phi_t(0)$ in coordinates $w$ is
$$ 
\D \phi_t(0)= 
\begin{bmatrix}
\e^{\I t} & 0 & 0 &\dots & 0\\
0 & \e^{\I t} & 0 &\dots & 0\\
0 & 0 & 0 & \dots & 0\\
\vdots &\vdots &\ddots & 0\\
 0 & 0 & 0 & \dots & 0
\end{bmatrix}.
$$

In these coordinates the isotropy group $I_0$ (that corresponds to $I_p$ at the original base point) contains the subgroup $I$, consisting of $\phi_t(w)$ and
\begin{align*}
w_1 & \mapsto w_1,\\
w_2 & \mapsto r^2(w_2 + \I u w_1),\\
\tilde{w} & \mapsto r U  \tilde{w}, \; \mbox{for} \; \tilde{w} =(w_3,\ldots,w_n),\\
w_{n+1} & \mapsto r^2 w_{n+1},
\end{align*}
for $r \neq 0, \, u \in \R$ and  some unitary {\small $(n-2)\times (n-2)$} matrix $U$ that preserves the positive definite hermitian form $\sum\limits_{j=3}^{n}|w_j|^2$. 
We show, that, in fact, $I_0= I$. 

Let $\Phi \in I_0$. Then the pseudo-unitary linear transformation $\D \Phi(0)$ has to preserve $F_{22}(w', \overline{w'})$. 
The Lie algebra of linear holomorphic vector fields that preserve $F_{22}(w', \overline{w'})$ consists of all fields of the form $\mathcal X = w X \frac{\partial}{\partial w}$, such that $\Re \mathcal X F_{22}  = \lambda F_{22}, \quad \lambda \in \R$. 
The general form of the matrix $X$ is 
$$
X = \begin{bmatrix}
r_{11} + \I s_{11} & \I s_{12} & -2r_{32} + 2\I s_{32} &\dots &\dots & -2r_{n2} + \I s_{n2}\\
\I s_{21} & -r_{11} + \I s_{11} & 2r_{31} + 2\I s_{31} &\dots & \dots &-2r_{n1} + \I s_{n1}\\
r_{31} + \I s_{31} &  r_{32} + \I s_{32} &\I s_{33}  & x_{34} & \dots & x_{3n} \\
r_{41} + \I s_{41} &  r_{42} + \I s_{42} & -\overline{x_{34}} & \I s_{44}  & \dots & x_{3n} \\
\vdots &\vdots &\vdots & \vdots &\vdots & \vdots \\
r_{n1} + \I s_{n1} &  r_{n2} + \I s_{n2} & -\overline{x_{3n}} & -\overline{x_{4n}}  & \dots & \I s_{nn}
\end{bmatrix}.
$$
Subtracting the fields of the known isotropy $I$ yields
$$
X = \begin{bmatrix}
0 & \I s_{12} & -2r_{32} + 2\I s_{32} &\dots &\dots & -2r_{n2} + \I s_{n2}\\
0 & 0 & 2r_{31} + 2\I s_{31} &\dots & \dots &-2r_{n1} + \I s_{n1}\\
r_{31} + \I s_{31} &  r_{32} + \I s_{32} &0  & 0 & \dots & 0 \\
r_{41} + \I s_{41} &  r_{42} + \I s_{42} & 0  & 0 & \dots & 0 \\
\vdots &\vdots &\vdots & \vdots &\vdots & \vdots \\
r_{n1} + \I s_{n1} &  r_{n2} + \I s_{n2} &0  & 0 & \dots & 0
\end{bmatrix}.
$$
Considering the form of $F_{22}$, the condition $\Re \mathcal X F_{22}  =  w X \frac{\partial}{\partial w} F_{22} = 0$ implies that the first row of $X$ must be zero. Therefore, its second row is zero as well, and, hence, $X=0$. Thus, the Jacobi matrices of all known isotropy automorphisms form the entire subgroup of unitary transformations that preserve $F_{22}$. Thus, $I_0 \subset I$, and, therefore $I_0=I$.

The isotropy transformations in $I_p$ that correspond to elements of $I_0$ of the form
\begin{align*}
w_1 & \mapsto w_1,\\
w_2 & \mapsto r^2w_2 ,\\
\tilde{w} & \mapsto r   \tilde{w}, \; \mbox{for} \; \tilde{w} =(w_3,\ldots,w_n),\\
w_{n+1} & \mapsto r^2 w_{n+1},
\end{align*}
are obviously the elements of the affine subgroup $G$ of the same form
\begin{align*}
z_1 & \mapsto z_1,\\
z_2 & \mapsto r^2z_2 ,\\
\tilde{z} & \mapsto r   \tilde{z}, \; \mbox{for} \; \tilde{z} =(z_3,\ldots,z_n),\\
z_{n+1} & \mapsto r^2 z_{n+1}.
\end{align*}
The isotropy in $I_p$ that corresponds to the generator 
\begin{align*}
w_1 & \mapsto w_1,\\
w_2 & \mapsto w_2 + \I u w_1,\\
\tilde{w} & \mapsto  \tilde{w}, \\
w_{n+1} & \mapsto  w_{n+1},
\end{align*}
of $I_0$ is 
\begin{align*}
z_1 & \mapsto z_1,\\
z_2 & \mapsto z_2 + \I u (z_1-1),\\
\tilde{z} & \mapsto  \tilde{z}, \\
z_{n+1} & \mapsto  z_{n+1} +\frac{\I u((z_1)^2-1)}{2}.
\end{align*}
The elements of $I_p$ that correspond to the generators
\begin{align*}
w_1 & \mapsto w_1,\\
w_2 & \mapsto w_2 ,\\
w_{3+j} & \mapsto \e^{I \theta} w_{3+j}, \mbox{for some}\;  j \in \{0,1,\ldots,n-3\} \\
w_{3+k} & \mapsto  w_{3+k}, \mbox{for} \; k=0,\ldots,n-2, \; k\neq j 
\end{align*}
reduce to 
\begin{align*}
z_1 & \mapsto z_1,\\
z_2 & \mapsto z_2 - \frac{(\e^{2\I \theta} -1)(z_{3+j})^2}{2} ,\\
z_{3+j} & \mapsto \e^{\I \theta} z_{3+j}, \mbox{for some} \; j \in \{0,1,\ldots,n-3\} \\
z_{3+k} & \mapsto  z_{3+k}, \mbox{for} \; k=0,\ldots,n-2, \; k\neq j.
\end{align*}
The generators of $I_0$ of the form 
\begin{align*}
w_1 & \mapsto w_1,\\
w_2 & \mapsto w_2 ,\\
w_{j} & \mapsto \cos(\theta) w_{j} + \sin(\theta) w_{k} \; \mbox{for some pair}\;  j,k \in \{3,\ldots,n\}, j<k \\
w_{k} & \mapsto -\sin(\theta) w_{j} + \cos(\theta) w_{k} \\
w_{\ell} &\mapsto w_{\ell}, \; \mbox{for all other}\; \ell \neq j \; \mbox{or} \; k 
\end{align*}
correspond to rotations 
\begin{align*}
z_1 & \mapsto z_1,\\
z_2 & \mapsto z_2 ,\\
z_{j} & \mapsto \cos(\theta) z_{j} + \sin(\theta) z_{k} \; \mbox{for some pair}\;  j,k \in \{3,\ldots,n\}, j<k \\
z_{k} & \mapsto -\sin(\theta) z_{j} + \cos(\theta) z_{k} \\
z_{\ell} &\mapsto z_{\ell}, \; \mbox{for all other}\; \ell \neq j \; \mbox{or} \; k. 
\end{align*} 
The generators of $I_0$ of the form 
\begin{align*}
w_1 & \mapsto w_1,\\
w_2 & \mapsto w_2 ,\\
w_{j} & \mapsto \cos(\theta) w_{j} + \sin(\theta) w_{k} \; \mbox{for some pair}\;  j,k \in \{3,\ldots,n\}, j<k \\
w_{k} & \mapsto -\sin(\theta) w_{j} + \cos(\theta) w_{k} \\
w_{\ell} &\mapsto w_{\ell}, \; \mbox{for all other}\; \ell \neq j \; \mbox{or} \; k 
\end{align*}
correspond to  
\begin{align*}
z_1 & \mapsto z_1,\\
z_2 & \mapsto z_2 ,\\
z_{j} & \mapsto \cos(\theta) z_{j} + \I \sin(\theta) z_{k} \; \mbox{for some pair}\;  j,k \in \{3,\ldots,n\}, j<k \\
z_{k} & \mapsto \I \sin(\theta) z_{j} + \cos(\theta) z_{k} \\
z_{\ell} &\mapsto z_{\ell}, \; \mbox{for all other}\; \ell \neq j \; \mbox{or} \; k. 
\end{align*}
And, finally, 
the holomorphic vector field $Y_{n^2-2n+8}$ integrates to the $1$-parameter subgroup 
\begin{align*}
z_1 & \mapsto \frac{\e^{-\I t}z_1 +\e^{-\I t} + z_1-1} {\e^{-\I t}z_1 +\e^{-\I t} - z_1+1} ,\\
z_2 & \mapsto -\frac{-\e^{-\I t}z_1 z_2 +2\e^{-\I t}z_{n+1} - \e^{-\I t}z_2 +z_2z_1-2z_{n+1}-z_2} {\e^{-\I t}z_1 +\e^{-\I t} - z_1+1},\\
\tilde{z} &   \mapsto\tilde{z}, \; \mbox{for} \; \tilde{z} =(z_3,\ldots,z_n),\\
z_{n+1} & \mapsto \frac{4\e^{-\I t}z_{n+1}} {(\e^{-\I t}z_1 +\e^{-\I t} - z_1+1)^2}.
\end{align*} 
As we see, all listed generators of $I_p$ are polynomial and birational transformations that give rise to the vector fields 
$Y_1, \ldots, Y_{n^2-2n +8}$ that generate $\is_{p_0}$.
\end{proof}

Our final result shows that the domains $C^{>}$ and $C^{<}$ are not Penney nil-balls.
\begin{thm}
Neither of two domains $C^{>}$ or $C^{<}$ is equivalent to a nil-ball.
\end{thm}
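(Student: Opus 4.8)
The plan is to compare the full automorphism algebras --- which the previous theorem describes explicitly --- against the structure a nil-ball must have, and to exhibit inside the former a non-compact semisimple subalgebra that no nil-ball can admit. Write $\mathfrak{g}^{>}$ and $\mathfrak{g}^{<}$ for the Lie algebras of $G^{>}$ and $G^{<}$; by the previous theorem they are spanned by the vector fields $Y_{1},\dots,Y_{n^{2}-2n+8}$ listed there and in the accompanying remark.

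First I would recall the structural feature of Penney's nil-balls \cite{P} to be contradicted. A nil-ball $D$ carries a simply transitive split-solvable group of automorphisms $S$: its nilradical $N$ acts simply transitively on a hypersurface orbit in $\partial D$, the abelian complement acts on $\mathfrak{n}$ by dilations with positive real eigenvalues, and the identity component of $\operatorname{Aut}(D)$ equals $K\ltimes S$ with $K$ a maximal compact subgroup and $S$ normal. Hence the Lie algebra $\mathfrak{s}$ of $S$ is a solvable ideal of $\mathfrak{aut}(D)$ with $\mathfrak{aut}(D)/\mathfrak{s}\cong\mathfrak{k}$ compact, and therefore $\mathfrak{aut}(D)$ contains no copy of $\gsl_{2}(\R)$: a subalgebra $\mathfrak{q}\cong\gsl_{2}(\R)$ is simple, so $\mathfrak{q}\cap\mathfrak{s}$ is $0$ or $\mathfrak{q}$; it cannot be $\mathfrak{q}$ as $\mathfrak{s}$ is solvable, and if it is $0$ then $\mathfrak{q}$ injects into the compact algebra $\mathfrak{k}$, which is impossible since a subalgebra of a compact Lie algebra is compact whereas $\gsl_{2}(\R)$ is not.

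Next I would produce such a subalgebra inside $\mathfrak{g}^{>}$. Consider the three fields
\begin{gather*}
Y_{1}=\I\frac{\partial}{\partial z_{1}},\qquad
Y_{n+2}=z_{1}\frac{\partial}{\partial z_{1}}+z_{n+1}\frac{\partial}{\partial z_{n+1}},\\
Y_{n^{2}-2n+8}=\frac{\I}{2}(z_{1}^{2}-1)\frac{\partial}{\partial z_{1}}+\I z_{n+1}\frac{\partial}{\partial z_{2}}+\I z_{1}z_{n+1}\frac{\partial}{\partial z_{n+1}},
\end{gather*}
all of which belong to $\mathfrak{g}^{>}$. A short bracket computation yields $[Y_{1},Y_{n+2}]=Y_{1}$, $[Y_{1},Y_{n^{2}-2n+8}]=-Y_{n+2}$ and $[Y_{n+2},Y_{n^{2}-2n+8}]=Y_{n^{2}-2n+8}+Y_{1}$, so their linear span is a three-dimensional subalgebra coinciding with its own derived algebra; since $\operatorname{ad}_{Y_{n+2}}$ acts on that span with the real eigenvalues $-1,0,1$, the subalgebra is isomorphic to $\gsl_{2}(\R)$, a standard triple being $h=2Y_{n+2}$, $e=Y_{1}+2Y_{n^{2}-2n+8}$, $f=Y_{1}$. (Consistently, $\exp(tY_{n^{2}-2n+8})$ is exactly the elliptic one-parameter group $\phi_{t}$ computed above, which spans the maximal compact $\mathfrak{so}(2)$ of this $\gsl_{2}(\R)$.) A biholomorphism $C^{>}\to D$ onto a nil-ball would induce a Lie algebra isomorphism $\mathfrak{g}^{>}\cong\mathfrak{aut}(D)$, contradicting the previous paragraph. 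For $C^{<}$ the argument is identical: $C^{<}$ shares the boundary hypersurface $\tilde{\Gamma}$ with $C^{>}$ and the entire discussion is invariant under the real reflection $x_{n+1}\mapsto -x_{n+1}$, so $\mathfrak{g}^{<}$ contains the same $\gsl_{2}(\R)$.

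The bracket computation is routine; the delicate point is the input about nil-balls. One must pin down exactly which structural property of Penney nil-balls is invoked --- here, that the full automorphism group is compact-by-split-solvable, so its Levi factor is compact --- and be sure this is a genuine biholomorphic obstruction rather than, say, a dimension count or a Chern--Moser normal-form condition. If only the existence of the simply transitive solvable group $S$ is directly available from \cite{P}, one must first establish the rigidity statement that for nil-balls the full automorphism group gains no extra non-compact semisimple part; securing that statement is where the substantive work of the proof lies.
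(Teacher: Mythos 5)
Your identification of the copy of $\gsl_2(\R)$ spanned by $Y_1$, $Y_{n+2}$ and $Y_{n^2-2n+8}$ is correct, your brackets and the standard triple check out, and this is exactly the subalgebra the paper exhibits (written there in a slightly different basis). The gap is in the obstruction you invoke. The structural claim that for a nil-ball $D$ the identity component of $\operatorname{Aut}(D)$ is compact-by-(split-solvable), and hence contains no copy of $\gsl_2(\R)$, is false: the unit ball, equivalently the Siegel upper half-space $\Im w>|z|^2$, is the model nil-ball (Penney's class explicitly generalizes homogeneous Siegel domains, and the boundary here is a Heisenberg group orbit), yet its automorphism group is $PU(n,1)$, a non-compact simple group containing many copies of $SL_2(\R)$. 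In particular the solvable group from the Iwasawa-type decomposition is not normal there, and the "rigidity statement" you defer to in your last paragraph is not merely unproven but untrue. The mere presence of $\gsl_2(\R)$ in the symmetry algebra therefore cannot distinguish $C^{>}$ or $C^{<}$ from a nil-ball.

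What the paper uses instead is the finer fact, quoted from Penney, that the boundary hypersurface of a nil-ball admits a transitive action of a nilpotent group; hence $\g$ would have to contain a nilpotent subalgebra $\mathfrak{n}$ with $\mathfrak{n}+\mathfrak{i}=\g$, where $\mathfrak{i}$ is the isotropy subalgebra at a point of $\tilde\Gamma$. The decisive point is then how $\mathfrak{i}$ meets the $\gsl_2(\R)$: it intersects it only in the line $\R\,Y_{n^2-2n+8}$, which is the elliptic (compact) direction you yourself noted, not a Borel. A transitive nilpotent subalgebra would therefore have to project onto a two-dimensional nilpotent subalgebra of $\gsl_2(\R)$, and no such subalgebra exists (every two-dimensional subalgebra of $\gsl_2(\R)$ is a Borel, solvable but not nilpotent). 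For the ball this argument fails precisely because the boundary isotropy there contains a full Borel of the relevant $\gsl_2(\R)$, so the Heisenberg algebra need only cover a one-dimensional nilpotent direction. To repair your proof, replace the compact-by-solvable claim by the transitive-nilpotent-action property of nil-ball boundaries and add the computation of $\mathfrak{i}\cap\gsl_2(\R)$; that isotropy computation is the substantive step.
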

\begin{proof}
If either of these domains was equivalent to a nil-ball then $\tilde \Gamma$ would admit a transitive action of a nilpotent group (see \cite{P1}, \cite{P3}). This implies that the Lie algebra $\g$ of $G^{>} = G^{<}$ would have a nilpotent subalgebra that acts transitively on $\tilde \Gamma$. We will show that such a subalgebra does not exist.

The following three vector fields form an $\gsl_2(\R)$ subalgebra of $\g$:
\begin{align*}
&\frac\I4 \bigl( (z_1^2-1)\frac{\partial}{\partial z_{1}} + 2z_{n+1} \frac{\partial}{\partial z_{n}}+2z_{n+1} z_1 \frac{\partial}{\partial z_{n+1}}\bigr)= \frac12 Y_{n^2-2n+8}
\\
&z_1\frac{\partial}{\partial z_{1}}+z_{n+1}\frac{\partial}{\partial z_{n+1}} = Y_{n+2}
\\
&\frac\I4 \bigl( z_1^2\frac{\partial}{\partial z_{1}} + 2z_{n+1}\frac{\partial}{\partial z_{n}}+2z_{n+1} z_1\frac{\partial}{\partial z_{n+1}}\bigr) = \frac12 Y_{n^2-2n +8} + \frac14 Y_1.
\end{align*}
The isotropy contains only the first
element. Therefore the projection of a transitive nilpotent subalgebra of $\g$ on this $\gsl_2(\R)$, if such exists, would be 2-dimensional and nilpotent. This is impossible.
\end{proof}

\end{document}